\newtheorem{thm}{Theorem}
\newtheorem{prop}{Proposition}
\newtheorem{lem}{Lemma}
\newtheorem{rem}{Remark}
\newcommand{\ind}[1]{\,\mathds{1}_{#1}}
\newcommand{\E}{\mathbb{E}}
\pgfplotsset{width=10cm,compat=1.9}
\title{Asymptotics for irregularly observed long memory processes}
\author{M. Ould Haye$^1$\footnote{corresponding author : {mohamedouhaye@cunet.carleton.ca}}, A. Philippe$^2$
}
\date{$^1$\small School of Mathematics and Statistics. \\
Carleton University, 1125 Colonel By Dr. Ottawa, ON, Canada, K1S 5B6
\\ $^2$ Laboratoire de Math\'{e}matiques Jean Leray,\\
2 rue de la Houssiniere, Universit\'{e} de Nantes, 44 322 Nantes France.}
\begin{document}

\maketitle
\begin{abstract}
We study the effect of observing a long memory stationary process at irregular time points via a renewal process. We establish a sharp difference in the asymptotic behaviour of the self-normalized sample mean of the observed process depending on the renewal process. In particular, we show that if the renewal process has a moderate heavy tail distribution, then the limit is a so-called Normal Variance Mixture (NVM) and we characterize the randomized variance part of the limiting NVM as an integral function of a L\'evy stable motion. Otherwise, the normalized sample mean will be asymptotically normal.
\end{abstract}
\textbf{Keywords :} Irregular Time Series, Linear processes, Long memory, Normal Variance Mixture, Short memory, Stationarity,

\section{Introduction}\label{Introduction}

Modeling and analyzing irregularly observed time series data is a classic problem encountered across various fields, including astronomy (\cite{10.1093rastirzac011}), signal processing (\cite{SUN2024110075}),
finance (\cite {huang2024generative}), environmental sciences (\cite{Beelaerts2010TimeSeriesRF}), and biomedical sciences (\cite{Shukla2018ModelingIS}). 
Additionally, generating irregularly observed time series is utilized in differential privacy methods to enhance data privacy (e.g., \cite{pmlr-v151-koga22a}).

The statistical treatment of irregularly observed data
 can be traced back to \cite{Lomb1976LeastsquaresFA}, \cite{Parzen1984TimeSA}, \cite{10.1007/978-1-4684-9403-7_8} and \cite{JONES1985157} where Kalman filter and maximum likelihood-based methods were proposed to handle the irregularity in observing the data and were efficient in the case of missing data.
 
 A common approach consists in embedding the 
observed process $X_{t_k},k=1,\ldots,n$, where $t_k$ are the observed time points, into a continuous-time process $X_t, t>0$. However, there are few results available for inference from randomly observed long memory time series data with the exception of Gaussian processes (\cite{bardet}). Most of the existing literature focuses on inference from deterministic sampling of continuous-time processes (see \cite{tsai1,tsai2}, \cite{comte1}, \cite{comte2}, 
\cite{chambers}).
A different context appears when the random time
ignoring the irregular sampling of the process $X$ and considering the discrete-time process $Y_k$ 
\begin{equation}\label{sampled1}
Y_k=X_{T_k},\qquad k=1,2,\ldots
\end{equation}
to investigate what information on $X$ can be retrieved without observing $T_k$ of a renewal process $T$.
Recent publications have pointed out some of the challenges, one can be facing when working with such data, inherent to the very nature of the lack of regularity of their occurrences as such regularity has been the cornerstone of time series analysis. For instance, \cite{PhilippeRobetViano} showed that while stationarity is preserved, if one samples from a Gaussian process then the observed process will no longer be Gaussian. Despite the loss of normality of the process, the normalized partial sum process converges to a fractional Brownian motion, provided that the renewal process has a finite first moment.
\cite{philippe-viano2008} showed that using a renewal process with infinite moment can drastically reduce the memory and result in a sampled process with short-range dependence if the original process has a long-range one.
\cite{OuldHaye2023} proved that the local Whittle estimator remains a consistent estimator of the long-memory parameter of $X$ when the renewal process is a Poisson process and $X$ is Gaussian. 

In this paper, we investigate large sample properties of $Y_k$ defined in (\ref{sampled1}) when $X$ is of long memory without necessarily being Gaussian. 
We establish a sharp difference in the asymptotic behaviour of the self-normalized sample mean of the observed process depending on the renewal process that is being used to model the unevenly observed data. As we will see, it all amounts to a competition between the tail distribution parameter $\alpha$ and the so-called memory parameter $d$. As shown in Theorem 1 below, if the renewal process has a finite moment (e.g., a Poisson process) or, conversely, has a 'very heavy' tail distribution, the limiting distribution is the same—namely, a normal distribution. However, if the renewal process has an 'intermediate heavy' tail distribution, the limiting distribution becomes a Normal Variance Mixture (NVM), which is the product $\sqrt{Z}N$ of two independent random variables where $Z$ is positive (often referred to as randomized variance) and $N$ has the standard normal distribution. More details on NVM can be found in \cite{HINTZ2021107175} and the references therein. We provide a complete characterization of the randomized variance component of the limiting NVM as an integral function of a L\'evy stable subordinator. Additionally, we establish the continuity of the limiting distribution as it transitions between different levels of tail heaviness (i.e., from very heavy to intermediate heavy tails). Several auxiliary results, which may be of independent interest, are also presented. Specifically, we derive exact asymptotic expressions for the covariance function of the observed process and the asymptotic expectation of harmonic means for a class of distributions supported on the unit interval, extending earlier results for the uniform distribution. 

The remainder of the paper is organized as follows: In Section 2, we present a technical lemma concerning the renewal process $T_n$ and derive exact asymptotic expressions for the covariance function of the sampled process $Y_n$ under general weak stationarity assumptions when $T_n$ has infinite moments. Section 3 contains the main theorem and its proof. The Appendix provides the proofs of the technical results.
\section{Sampling from a covariance-stationary process}
We consider a covariance-stationary process $X_t$ where the time index can be either discrete or continuous.
(i.e., $t=1,2,\ldots,$, or $t\ge0$) with a covariance function $\sigma_X$ that behaves asymptotically as
\begin{equation}\label{cov_stat}
\sigma_X(h)\sim \tilde C_dh^{2d-1},\qquad\textrm{as }h\to\infty,\qquad0<d<\frac{1}{2},
\end{equation}
for some positive constant $\tilde C_d$. This implies
\begin{equation}\label{exact1}
\sigma_X(h)=u(h+1)(h+1)^{2d-1},
\end{equation}
where $u$ is bounded, say by $K$, nonzero, $u(h)\to\tilde C_d$ as $h\to\infty$.
These processes are referred to as long-memory processes as their covariance function decays very slowly with increasing lag $h$ making it non-summable.
The parameter $d$ is called the memory parameter of the process $X_t$. For further details on long-memory processes, see for instance \cite{surgailis2012large}.

Now, assume the renewal process $T$ is given by
$$T_k=\Delta_1+\cdots+\Delta_k,$$ 
where $\Delta_j$ are i.i.d. positive random variables. 
 The purpose of this Section is to derive the asymptotic expressions for the covariance function of the sampled process $Y_n=X_{T_n}$ (as defined in (\ref{sampled1})) when the sampling renewal process has heavy tail distribution with infinite first moment. In \cite{philippe-viano2008} and \cite{PhilippeRobetViano}, exact expressions were derived for the case when $\mathbb{E}(\Delta_1)<\infty$, while only asymptotic bounds were found when $\mathbb{E}(\Delta_1)=\infty.$\\
 Assume that the tail probability of $\Delta_1$ is given by
\begin{equation}\label{heavy tail} 
P(\Delta_1\ge x)=x^{-\alpha}\ell(x),\qquad\textrm{for all }x\ge1,
\end{equation}
where $0<\alpha\le1$ and $\ell$ a slowly varying function at infinity. Note that in the case of continuous time index,  $\Delta_1$ can have the entire $(0,\infty)$ as support but one is interested on the tail probability function on $[1,\infty)$ only.
Note also that equation above results in the fact that for all $1\le x\le a<\infty$,
\begin{equation}\label{karamata1}
0<P(\Delta_1\ge a)\le\ell(x)\le a^\alpha<\infty
\end{equation}
and therefore $\ell(x)$ is locally bounded away from zero and infinity on $[1,\infty)$.
For any positive integer $n$, define the function
\begin{equation}\label{elstar}
\ell^*(n)=\int_1^n\frac{\ell(x)}{x}dx.
\end{equation}
By a special version of Karamata Theorem (see \cite{bingham1987large}, formula 1.5.8. page 26) we obtain that $\ell^*(x)$ is also a slowly varying function, satisfying 
\begin{equation}\label{karamata00}
\frac{\ell^*(n)}{\ell(n)}\to\infty.
\end{equation} 
For each positive integer $n$, let $b_n$ be the quantile of order $1-1/n$ of $F$, the distribution function of $\Delta_1$, i.e.
\begin{equation}\label{bn}
\frac{1}{1-F(b_n)}=n, \quad \forall n=1,2,3,\ldots. 
\end{equation}
The following Lemma establishes certain properties for the renewal process that will be useful to ensure certain uniform integrability conditions needed in the proof of Proposition \ref{PropSigmaY} as well as Theorem \ref{them1}.
\begin{lem}\label{technical}
Let $M_n=\max(\Delta_1,\ldots,\Delta_n)$, where $\Delta_j>0$ are i.i.d. (either continuous or integer-valued) random variables satisfying (\ref{heavy tail}).\\
(i) For any $r>0$ and $\alpha\le1$,
$$
\underset{n\ge1}{\sup}\,\mathbb{E}\left[\left(\frac{M_n+1}{b_n}\right)^{-r}\right]<\infty.
$$
(ii) For any $0<r<\alpha\le1$,
$$
\underset{n\ge1}{\sup}\,\int_{[0,1]^2}\mathbb{E}\left[\left(\frac{ T_{\vert[nx]-[ny]\vert}+1}{b_n}\right)^{-r}\right]dxdy<\infty.
$$
(iii) If $\alpha=1$ then 
\begin{equation*}
\underset{n\ge1}{\sup}\,\mathbb{E}\left[\left(\frac{T_n+1}{b_n}\frac{\ell(b_n)}{\ell^*(b_n)}\right)^{-2}\right]<\infty.
\end{equation*}
(iv) If $\alpha=1$ then for any $0<r<1$,
$$
\underset{n\ge1}{\sup}\,\int_{[0,1]^2}\mathbb{E}\left[\left(\frac{T_{\vert[nx]-[ny]\vert}+1}{b_n}\frac{\ell(b_n)}{\ell^*(b_n)}\right)^{-r}\right]dxdy<\infty.
$$
\end{lem}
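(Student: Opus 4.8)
\noindent\emph{Sketch of proof.}\quad
The unifying tool is the layer-cake identity $\mathbb{E}[W^{-r}]=r\int_0^\infty t^{-r-1}\,P(W<t)\,dt$, valid for any positive $W$, which turns each assertion into a uniform-in-$n$ control of a lower-tail probability weighted by $t^{-r-1}$. I will repeatedly use two structural facts: that $b_n$ inverts the regularly varying map $x\mapsto x^\alpha/\ell(x)$, so $b_n$ is regularly varying (of index $1/\alpha$ when $\alpha<1$, of index $1$ when $\alpha=1$) and ratios $b_n/b_m$ obey Potter's bounds; and the trivial domination $T_m\ge M_m$ coming from positivity of the increments. For (i), writing $U_n=(M_n+1)/b_n$ and using $P(M_n<x)\le F(x)^n\le\exp(-n\bar F(x))$ with $\bar F(x)=x^{-\alpha}\ell(x)$, the substitution $x=tb_n$ together with $n\bar F(b_n)=1$ and Potter's bounds for $\ell(tb_n)/\ell(b_n)$ gives $P(U_n<t)\le\exp(-c\,t^{-(\alpha-\delta)})$ for bounded $t$, with $c,\delta$ independent of $n$; this Fréchet-type super-exponential decay as $t\downarrow0$ makes $\int_0^\infty t^{-r-1}P(U_n<t)\,dt$ finite and uniformly bounded. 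The only delicate range is $t\in[1/b_n,X/b_n]$, where $\ell$ is merely locally bounded; there $\bar F(tb_n)$ stays bounded below, so $P(U_n<t)\le(1-p_0)^n$ for some $p_0>0$, and this genuine exponential decay absorbs the $b_n^r$ produced by the integral. This settles (i) for all $r>0$ and all $\alpha\le1$.

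For (ii) I would first replace the double integral by the Cesàro average $\tfrac1n\sum_{m}c_m\,\mathbb{E}[((T_m+1)/b_n)^{-r}]$ with $m=|[nx]-[ny]|$ and weights $c_m\asymp1$; the diagonal term $m=0$ contributes $O(b_n^r/n)\to0$. For $m\ge1$, dominate $T_m\ge M_m$ and rescale, invoking (i): $\mathbb{E}[((T_m+1)/b_n)^{-r}]\le(b_n/b_m)^r\,\mathbb{E}[((M_m+1)/b_m)^{-r}]\le C_r(b_n/b_m)^r$. Potter's bound gives $(b_n/b_m)^r\le C(n/m)^{r/\alpha+\varepsilon}$ for $m\le n$, and because $r<\alpha$ one may choose $\varepsilon$ with $r/\alpha+\varepsilon<1$, whence $\tfrac1n\sum_{m=1}^n(n/m)^{r/\alpha+\varepsilon}=O(1)$. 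Thus the hypothesis $r<\alpha$ is exactly what renders the average summable.

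The substance lies in (iii). Here the maximum is too crude: for $\alpha=1$ the sum is not carried by its largest term but concentrates around $n\,\mathbb{E}[\Delta_1\wedge b_n]\sim n\ell^*(b_n)=b_n\beta_n$, where $\beta_n=\ell^*(b_n)/\ell(b_n)\to\infty$. With $\widetilde T_n=\sum_{j=1}^n(\Delta_j\wedge b_n)$, Karamata's theorem yields $\mathbb{E}\widetilde T_n\sim b_n\beta_n$ and $\Var\,\widetilde T_n\le n\,\mathbb{E}[(\Delta_1\wedge b_n)^2]\sim2b_n^2$, so the standard deviation is smaller than the mean by the factor $\beta_n^{-1}$. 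Writing $\mathbb{E}[W_n^{-2}]=2(b_n\beta_n)^2\int_1^\infty v^{-3}\,P(T_n\le v-1)\,dv$, I would split the $v$-range into three pieces. On the bulk $v\ge\tfrac12 b_n\beta_n$ one bounds $P\le1$ and the prefactor is absorbed, leaving $O(1)$. On the moderate window $b_n\le v\le\tfrac12 b_n\beta_n$, Bernstein's inequality for $\widetilde T_n$ gives $P(T_n\le v)\le e^{-c\beta_n}$, so this piece is $\le\tfrac12\beta_n^2 e^{-c\beta_n}\to0$. The deep region $1\le v\le b_n$ is where I expect the real difficulty, and where neither the maximum nor Bernstein suffices: both leave an uncancelled prefactor $b_n^2$ and may diverge (e.g.\ when $\beta_n\asymp(\ln b_n)^{1/2}$). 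The correct instrument is the untruncated Chernoff bound $P(T_n\le v)\le e^{\lambda v}(\mathbb{E}e^{-\lambda\Delta_1})^n$ with the threshold-adapted choice $\lambda=1/v$; the Tauberian estimate $1-\mathbb{E}e^{-\lambda\Delta_1}\sim\lambda\ell^*(1/\lambda)$ (available precisely because $\alpha=1$) produces a rate $n\psi(1/v)$, with $\psi(\lambda):=1-\mathbb{E}e^{-\lambda\Delta_1}\sim\ell^*(v)/v\cdot v\lambda$, that equals $\sim\beta_n$ at $v=b_n$ but rises to order $n$ as $v\downarrow1$, reflecting the exponential-in-$n$ cost of keeping a sum of $n$ positive variables small. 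A Laplace-type analysis then shows the corresponding integral is dominated by the endpoint $v\approx b_n$, of total size $O(\beta_n e^{-\beta_n})\to0$. Assembling the three pieces bounds $\mathbb{E}[W_n^{-2}]$ uniformly.

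Finally, (iv) combines both mechanisms. Rerunning the argument of (iii) with the exponent $r<1$ in place of $2$---only easier, the prefactor now being the smaller power $(b_m\beta_m)^r$---gives the uniform single-index bound $\mathbb{E}[(T_m+1)^{-r}]\le C\,(b_m\beta_m)^{-r}$. Substituting this into the Cesàro average of (ii) yields $\mathbb{E}[((T_m+1)/(b_n\beta_n))^{-r}]\le C\,(b_n\beta_n/b_m\beta_m)^r$; since $m\mapsto b_m\beta_m\sim m\ell^*(b_m)$ is regularly varying of index $1$, Potter's bound gives $(b_n\beta_n/b_m\beta_m)^r\le C(n/m)^{r+\varepsilon}$ with $r+\varepsilon<1$, so $\tfrac1n\sum_{m=1}^n(n/m)^{r+\varepsilon}=O(1)$. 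The decisive point is that the two factors $\beta_n$ and $\beta_m$ essentially cancel here, whereas a maximum-based bound would leave an uncancelled $\beta_n^r\to\infty$---which is exactly why the finer concentration argument of (iii) is needed.
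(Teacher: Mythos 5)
Your proposal is correct, and for parts (i)--(ii) it coincides with the paper's own argument: the same bound $P(M_n<x)\le F(x)^n\le\exp(-n\bar F(x))$ combined with $n\bar F(b_n)=1$ and Potter's bound for (i) (your layer-cake weight $t^{-r-1}$ and the paper's substitution $x\mapsto b_nx^{-1/r}$ are the same computation), and the same diagonal split, domination $T_m\ge M_m$, rescaling to $b_m$, and Potter bound $(b_n/b_m)^r\le C(n/m)^{r/\alpha+\varepsilon}$ with $r/\alpha+\varepsilon<1$ for (ii). The genuine divergence is in (iii), where the paper uses no concentration inequality at all: it writes $T_n+1\ge\sum_{j\le n/2}\Delta_j1_{\{\Delta_j\le b_n\}}+M_n^*+1$, with $M_n^*$ the maximum of the second half (independent of the first half), notes that the truncated half-sum divided by $b_n$ has mean $\mu_n\to\infty$ and variance $\sigma_n^2\to\tfrac12$, and then applies Pittenger's sharp mean--variance bound for the convex map $x\mapsto(x+c)^{-2}$ conditionally on $c=(M_n^*+1)/b_n$, getting $\mathbb{E}[(\cdot)^{-2}]\le\frac{\sigma_n^2}{\sigma_n^2+\mu_n^2}\,\mathbb{E}\left[\left((M_n^*+1)/b_n\right)^{-2}\right]+\mu_n^{-2}$, after which part (i) finishes the job. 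Your three-region scheme (Bernstein on $b_n\le v\le\tfrac12 b_n\beta_n$, Chernoff at $\lambda=1/v$ with the Tauberian estimate $1-\mathbb{E}e^{-\lambda\Delta_1}\sim\lambda\ell^*(1/\lambda)$ on $v\le b_n$, where $\beta_n=\ell^*(b_n)/\ell(b_n)$) is also sound: I checked that the Laplace step survives the slowly varying corrections, e.g.\ by splitting $v\in[A,\sqrt{b_n}]$ from $v\in[\sqrt{b_n},b_n]$, on the latter of which $\ell^*(v)\ge\tfrac12\ell^*(b_n)$ makes the rate at least $\tfrac12(b_n/v)\beta_n$, so the total is indeed $O(\beta_ne^{-c\beta_n})$. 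What each route buys: yours is self-contained but needs this extra bookkeeping; the paper's half-split/Pittenger trick eliminates all exponential estimates at the cost of invoking a cited sharp inverse-moment inequality. Finally, in (iv) your write-up is actually more complete than the paper's: the paper asserts the integrand is ``uniformly bounded over $D_n$'', which is false as literally stated (for small $m=|[nx]-[ny]|$ it grows like $(b_n\beta_n)^r$); what is uniformly bounded is $\mathbb{E}\left[\left((T_m+1)/(b_m\beta_m)\right)^{-r}\right]$ --- which follows from (iii) directly by Jensen's inequality, so there is no need to rerun (iii) with exponent $r$ --- and one then needs exactly your remaining step: Potter's bound applied to the index-one regularly varying sequence $m\mapsto b_m\beta_m=m\ell^*(b_m)$, producing the integrable factor $(n/m)^{r+\varepsilon}$ with $r+\varepsilon<1$.
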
 
\begin{proof}
The proof is postponed to Appendix.
\end{proof}
\begin{prop}\label{PropSigmaY}
Assume that $X_t$ is a covariance-stationary process (of discrete or continuous time) with the covariance function (\ref{cov_stat}) and that $Y_k$ is defined by (\ref{sampled1}). If $\Delta_1$ satisfies (\ref{heavy tail}) and $\mathbb{E}(\Delta_1)=\infty$, then the asymptotic behaviour of the covariance function $\sigma_Y$ of $Y_k$ 
is given by
\begin{equation}\label{exact12}
\sigma_Y(h)\sim\begin{cases}
 
 \tilde C_d\frac{\Gamma\left(\frac{1-2d}{\alpha}\right)}{\alpha\Gamma(1-2d)}b_h^{2d-1},&\textrm{if }\alpha<1,\\\\
 \tilde C_db_h^{2d-1}\left(\frac{\ell^*(b_h)}{\ell(b_h)}\right)^{2d-1},&\textrm{if }\alpha=1,
 \end{cases}
\end{equation}
\end{prop}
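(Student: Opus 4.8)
The plan is to reduce $\sigma_Y(h)$ to a single expectation involving the renewal sum $T_h$, and then to pass to the limit using the stable (or, for $\alpha=1$, degenerate) behaviour of $T_h/b_h$ together with the uniform integrability furnished by Lemma \ref{technical}. First I would condition on the renewal sequence $T$. Since $X$ is independent of $T$ and covariance-stationary, the conditional means of $Y_k=X_{T_k}$ are constant and the conditional covariance of $Y_k$ and $Y_{k+h}$ given $T$ equals $\sigma_X(T_{k+h}-T_k)$; as $T_{k+h}-T_k=\Delta_{k+1}+\cdots+\Delta_{k+h}$ has the same law as $T_h$, taking expectations and invoking the representation (\ref{exact1}) yields the exact identity
\begin{equation*}
\sigma_Y(h)=\mathbb{E}\big[\sigma_X(T_h)\big]=b_h^{2d-1}\,\mathbb{E}\!\left[u(T_h+1)\Big(\tfrac{T_h+1}{b_h}\Big)^{2d-1}\right].
\end{equation*}
Everything then reduces to identifying the limit of the rescaled expectation on the right.

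Next I would record the distributional limits of the normalized renewal sum. For $\alpha<1$ the classical stable limit theorem gives $T_h/b_h\xrightarrow{d}S_\alpha$, a positive $\alpha$-stable random variable, and since $b_h\to\infty$ we also have $(T_h+1)/b_h\xrightarrow{d}S_\alpha$. For $\alpha=1$ the correct statement is the weak law of large numbers in the barely-infinite-mean regime: with truncated mean $\mathbb{E}[\Delta_1\wedge b_h]\sim\ell^*(b_h)$ and $h\sim b_h/\ell(b_h)$, one obtains $T_h\big/\big(b_h\ell^*(b_h)/\ell(b_h)\big)\xrightarrow{P}1$, which is exactly the scaling appearing in parts (iii)--(iv) of Lemma \ref{technical}. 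In both regimes $T_h\to\infty$ almost surely, so $u(T_h+1)\to\tilde C_d$ by (\ref{exact1}); combined with Slutsky's theorem, the integrand converges in distribution to $\tilde C_d\,S_\alpha^{2d-1}$ when $\alpha<1$ and to the constant $\tilde C_d$ when $\alpha=1$.

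To turn this into convergence of the expectations I would invoke uniform integrability, which is precisely the role of Lemma \ref{technical}. Because $2d-1<0$, the integrand is a negative power, and uniform integrability reduces to a uniform bound on $\mathbb{E}[(\tfrac{T_h+1}{b_h})^{-(1-2d)p}]$ for some $p>1$; using $T_h\ge M_h$ this is dominated by the maximum-based estimate of part (i) (which holds for every $r>0$), while for $\alpha=1$ the square-integrability bound of part (iii) plays the same role, and boundedness of $u$ by $K$ promotes these to uniform integrability of the full integrand. This yields $\mathbb{E}[u(T_h+1)(\tfrac{T_h+1}{b_h})^{2d-1}]\to\tilde C_d\,\mathbb{E}[S_\alpha^{2d-1}]$ for $\alpha<1$ and $\to\tilde C_d$ for $\alpha=1$, which, after multiplying back by $b_h^{2d-1}$, already gives the two displayed asymptotics up to the constant in the case $\alpha<1$.

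Finally I would evaluate $\mathbb{E}[S_\alpha^{2d-1}]$. Writing $r=1-2d\in(0,1)$ and using the elementary identity $x^{-r}=\frac{1}{\Gamma(r)}\int_0^\infty\lambda^{r-1}e^{-\lambda x}\,d\lambda$ together with the Laplace transform $\mathbb{E}[e^{-\lambda S_\alpha}]=e^{-\lambda^\alpha}$ of the standard $\alpha$-stable subordinator, Fubini and the substitution $w=\lambda^\alpha$ give
\begin{equation*}
\mathbb{E}[S_\alpha^{-r}]=\frac{1}{\Gamma(r)}\int_0^\infty\lambda^{r-1}e^{-\lambda^\alpha}\,d\lambda=\frac{\Gamma(r/\alpha)}{\alpha\,\Gamma(r)}=\frac{\Gamma\!\big((1-2d)/\alpha\big)}{\alpha\,\Gamma(1-2d)},
\end{equation*}
which is the stated constant. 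I expect the uniform-integrability step to be the main obstacle: since $1-2d$ may exceed $\alpha$, the order of the negative moment required can be larger than $\alpha$, so a direct stable-tail estimate on $T_h/b_h$ is unavailable and one genuinely relies on the maximum-based bound of Lemma \ref{technical}(i) together with the regularizing ``$+1$''; the secondary difficulty is the careful bookkeeping of the slowly varying factor $\ell^*/\ell$ in the boundary case $\alpha=1$.
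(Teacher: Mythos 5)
Your proposal is correct and follows essentially the same route as the paper's proof: the reduction $\sigma_Y(h)=\mathbb{E}[\sigma_X(T_h)]$ followed by rescaling with $b_h$ (resp.\ $b_h\ell^*(b_h)/\ell(b_h)$ when $\alpha=1$), the stable limit theorem (resp.\ the truncated-mean weak law) for $T_h$, and uniform integrability supplied by parts (i) and (iii) of Lemma \ref{technical} to pass from convergence in distribution to convergence of expectations. The only departure is cosmetic: you derive $\mathbb{E}[L_1^{-r}]=\Gamma(r/\alpha)/(\alpha\Gamma(r))$ via the Laplace transform and Fubini, whereas the paper simply cites this formula from \cite{Shanbhag1977OnCS}.
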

\begin{proof}
Let first consider $0<\alpha<1$. Using (\ref{exact1}), we can write
\begin{eqnarray*}
\frac{\sigma_Y(h)}{b_h^{2d-1}}&=&\mathbb{E}\left(\frac{\sigma_X(T_h)}{b_h^{2d-1}}\right)=\mathbb{E}\left(u(T_h+1)\left(\frac{T_h+1}{b_h}\right)^{2d-1}\right):=\mathbb{E}(Z_h).
\end{eqnarray*}
Since $T_h\ge M_h$, 
$$
Z_h\le K\left(\frac{M_h+1}{b_h}\right)^{2d-1}.
$$ 
Thus $Z_h$ is uniformly integrable as shown in part (i) of Lemma \ref{technical}. Note that $T_h\overset{a.s.}{\to}\infty$ and hence $u(T_h+1)\overset{a.s.}{\to}\tilde C_d$. \\
Throughout this paper we use the functional central limit theorem for heavy tailed 
positive random variables when $0<\alpha<1$, \cite[see for example][Ch. 7, Corollary 7.1 and page 247] {resnick2007heavy}. 
\begin{equation}\label{fclt}
\frac{T_{[nt]}}{b_n}\overset{D[0,1]}{\to}L_t, \qquad\textrm{as }n\to\infty
\end{equation}
where $D[0,1]$ is the cadlag space of right continuous functions with finite left limit defined on [0,1], and $L_t$ is stable L\'evy subordinator process with parameter $\alpha$ (i.e., $L_t$ is nondecreasing in $t$). 
In particular, 
\begin{equation}\label{parti}
\frac{T_n}{b_n}\overset{\mathcal{D}}{\to}L_1,\qquad\textrm{as }n\to\infty. 
\end{equation}
and therefore 
$Z_h\overset{\mathcal{D}}{\to}\tilde C_dL_1^{2d-1}$. Using Theorem 25.12 of \cite{billingsley2017probability}, we then conclude that 
$$\mathbb{E}(Z_h)\to \tilde C_d\mathbb{E}(L_1^{2d-1})=\tilde C_d\frac{\Gamma\left(\frac{1-2d}{\alpha}\right)}{\alpha\Gamma(1-2d)},$$
using 
the fact that we have from \cite{Shanbhag1977OnCS}, for all $a>0$,
\begin{equation}\label{inverse_levy} 
\mathbb{E}\left(L_1^{-a}\right)=\frac{\Gamma(a/\alpha)}{\alpha\Gamma(a)}.
\end{equation}
Consider the case $\alpha=1$.
\begin{eqnarray*}
\mathbb{E}\left(\Delta_1\ind{\Delta\le n}\right)&=&\int_0^1xdF(x)+\int_1^nP(n\ge \Delta_1>x)dx=
\int_0^1xdF(x)+\ell^*(n)-\left(1-\frac{1}{n}\right)\ell(n)
\end{eqnarray*}
We know from \cite{resnick2007heavy}, page 219, formula 7.16, that 
$$
\frac{T_n}{b_n}-\frac{n}{b_n}\mathbb{E}\left(\Delta_1\ind{\Delta_1\le b_n}\right)\overset{\mathcal{D}}{\to}L_1.
$$ 
 By definition of $b_n$ in (\ref{bn}) we have $\frac{n}{b_n}=\frac{1}{\ell(b_n)}$. As $\mathbb{E}(\Delta_1)=\infty$, $\ell^*(n)\to\infty$ and hence using (\ref{karamata00}), we then obtain
\begin{equation}\label{harmonic}
\frac{T_n}{b_n}\frac{\ell(b_n)}{\ell^*(b_n)}\overset{P}{\to}1.
\end{equation}
We have 
$$
\frac{\sigma_Y(h)}{\left(b_h\frac{\ell^*(b_h)}{\ell(b_h)}\right)^{2d-1}}=
\mathbb{E}\left(\frac{\sigma_X(T_h)}{\left(b_h\frac{\ell^*(b_h)}{\ell(b_h)}\right)^{2d-1}}\right)
=\mathbb{E}\left(u(T_h+1)\left(\frac{T_h+1}{b_h\frac{\ell^*(b_h)}{\ell(b_h)}}\right)^{2d-1}\right):=\mathbb{E}(Z'_h).
$$
Using (iii) of Lemma \ref{technical}, we obtain that $Z'_h$ is uniformly integrable. This, combined with (\ref{harmonic}), implies that 
 $\mathbb{E}(Z'_h)\to\tilde C_d$. 
\end{proof}
\begin{rem}
 Equivalence (\ref{exact1}) provides exact asymptotic expressions for $\sigma_Y(h)$ as illustrated in the following discrete Pareto example:
$$ P(\Delta_1 = k) = \frac{1}{\zeta(1+\alpha)}k^{-1-\alpha},\qquad0<\alpha\le1,\quad k=1,2,3,\ldots,$$
 where $\zeta$ is the Riemann zeta function. Using the standard comparison between the sum and the integral of a decreasing function, we obtain that
 $$
 P(\Delta_1\ge n)=n^{-\alpha}\ell(n)
 $$
 where 
 $$
 \ell(n)\to\frac{1}{\alpha\zeta(1+\alpha)},\qquad\textrm{as }n\to\infty.
 $$
If the original process $X_n$ has covariance function of the form (\ref{cov_stat}), such as the well-known FARIMA processes, then
\cite{philippe-viano2008} obtained 
\begin{enumerate}
\item\label{item:1} if $\alpha > 1$,
\begin{equation}\label{less1}
 \sigma_Y(h)\sim C h^{2d-1}.
\end{equation}
In this case $X$ and $Y$ have the same memory parameter $d$.
\item \label{item:2} if $\alpha \leq 1$,
\begin{equation}\label{less2}
 C_1 h^{(2d-1)/(\alpha) -\epsilon} \leq
 \sigma_Y(h)\leq C_2 h^{(2d-1)/(\alpha)},\quad \forall \epsilon>0.
\end{equation}
\end{enumerate}
In the above, $C,C_1,C_2$ are some positive constants.
In this second case, the lower and upper bounds do not allow us to know the long memory parameter of the sampled process $Y_n$. 
Thanks to proposition \ref{PropSigmaY}, we obtain that the memory parameter is equal to $\dfrac{2d+\alpha - 1}{2 \alpha}$, which is smaller than $d$ when $\alpha<1$ and equal to $d$ when $\alpha=1$. Indeed, as $h\to\infty$,

we have
$$
b_h\sim
\frac{1}{\alpha\zeta(1+\alpha)} h\frac{1}{\alpha}=\frac{1}{\zeta(2)}=\frac{6}{\pi^2}h
$$
and, therefore, 
\begin{equation*}
\sigma_Y(h)\sim\begin{cases}
 \tilde C_d\frac{\alpha\Gamma\left(\frac{1-2d}{\alpha}\right)}{(1-2d)\Gamma(1-2d)}\left(\frac{1}{\alpha\zeta(1+\alpha)}\right)^{2d-1}h^{\frac{2d-1}{\alpha}},&\textrm{if }\alpha<1,\\\\
 \tilde C_d\left(\frac{6}{\pi^2}\right)^{2d-1}h^{2d-1}(\log h)^{2d-1}&\textrm{if }\alpha=1.
 \end{cases}
\end{equation*}
Note that even when $\alpha=1$, $\sigma_Y(h)$ converges to zero faster than $\sigma_X(h)$ due to the presence of $\log h$ in $\sigma_Y(h)$.
\end{rem}
\begin{rem}
Convergence (\ref{harmonic}), combined with (iii) of Lemma \ref{technical}, implies that
\begin{equation}\label{log_lim}
\mathbb{E}\left[\left(\frac{T_n}{b_n}\frac{\ell(b_n)}{\ell^*(b_n)}\right)^{-1}\right]\to1.
\end{equation}
\cite{PMID:25331886} showed that if $U_1,\ldots,U_n$ are i.i.d. copies from the uniform (0,1) distribution then
\begin{equation}\label{old1}
(\ln n)\mathbb{E}\left(\frac{n}{\frac{1}{U_1}+\cdots+\frac{1}{U_n}}\right)\to1.
\end{equation}
We retrieve this asymptotic expression of the expected value of the harmonic mean of i.i.d. uniformly distributed random variables as a particular case of (\ref{log_lim}) by considering continuous $\Delta_j$ with support $[1,\infty)$ and 
$$
P(\Delta_j>x)=\frac{1}{x},\qquad x>1.
$$
We will have 
 $\ell(n)=1$, $b_n=n$, and from (\ref{elstar}),
$$
\ell^*(n)=\int_1^n\frac{1}{x}dx=\ln n,
$$
and $T_n=\Delta_1+\cdots+\Delta_n$ has the same distribution as 
$$
\frac{1}{U_1}+\cdots+\frac{1}{U_n}.
$$
\end{rem}\noindent
\section{Asymptotic behaviour of the sums of the randomized process}
Assume that originally we have a linear process
\begin{equation}\label{lin11}
X_t=\sum_{i=0}^\infty a_i\epsilon_{t-i}=\sum_{j=-\infty}^ta_{t-j}\epsilon_j,
\end{equation}
where $\epsilon_j$ are i.i.d. with mean zero and variance $\sigma^2_\epsilon<\infty$ and
$$
\sum_{j=1}^\infty a_j^2<\infty.
$$
In the sequel we consider a linear long memory process by assuming that
\begin{equation}\label{eq:a}
a_i \sim C_d i^{d-1} \text{as } i\to \infty,
\end{equation}
where $0<d<1/2$ and $C_d$ is a positive constant. Condition (\ref{eq:a}) ensures that $X_t$ is of long memory with covariance function satisfying condition (\ref{cov_stat}) with
\begin{equation}\label{new C}
\tilde C_d:=\sigma_\epsilon^2C_d^2\,\,\beta(d,1-2d),
\end{equation}
where $\beta$ is the beta function (see e.g. \cite{surgailis2012large}, Proposition 3.2.1.).
We study the asymptotic behaviour of the sum of the sampled process $Y_k$ defined in (\ref{sampled1}), 
$$
\sum_{k=1}^nY_k=\sum_{k=1}^nX_{T_k}
$$
Firstly, we establish in the following proposition the asymptotic normality of its partial sums when normalized by the conditional variance, irrespective of $\mathbb{E}(\Delta_1)$ being finite or not. 
\begin{lem}\label{lem1}
 (i) Let $\sigma_X$ and $\sigma_Y$ be the covariance functions of the stationary processes $X_t$ and $Y_t$ defined in (\ref{lin11})-(\ref{eq:a}) and (\ref{sampled1}). 
 There exists $m\geq 1 $ such that
$\sigma_X(h) >0$ for all $h\ge m-1 $ and 
$$ \sum_{j= -m } ^m \sigma_Y(j) >0. $$
(ii) Let \begin{equation}\label{dn}
d_{n,j}=\sum_{k=1}^n a_{T_k-j},\qquad d_n^2 := \sum_{j\in\mathbb{Z}}d^2_{n,j}.
\end{equation}
\begin{equation}\label{lindeberg}
\sup_{j\in\mathbb{Z}}\frac{d^2_{n,j}}{\sum_{j\in\mathbb{Z}}d^2_{n,j}}\overset{P}{\to}0.
\end{equation} 
\end{lem}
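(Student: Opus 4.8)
The plan is to treat the two parts separately, observing first that the whole statement concerns only the renewal sequence $T=(T_k)$, since $d_{n,j}=\sum_{k=1}^n a_{T_k-j}$ does not involve the innovations $\epsilon_j$; throughout I take the time index discrete, so that $\Delta_j\ge1$ and the $T_k$ are strictly increasing integers (this is what makes the representation $\sum_{k=1}^nY_k=\sum_{j}d_{n,j}\epsilon_j$ meaningful).

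For part (i) the first assertion is immediate: since $\sigma_X(h)\sim\tilde C_d h^{2d-1}$ with $\tilde C_d>0$, there is $m$ with $\sigma_X(h)>0$ for every $h\ge m-1$. For the second assertion I would use that $\sigma_Y(0)=\sigma_X(0)>0$ and that $\sigma_Y(h)>0$ for all large $h$ (Proposition \ref{PropSigmaY} gives a positive leading constant), while the finitely many intermediate lags satisfy $|\sigma_Y(h)|\le\sigma_Y(0)$. Writing $\sum_{j=-m}^m\sigma_Y(j)=\sigma_Y(0)+2\sum_{h=1}^m\sigma_Y(h)$, I would then enlarge $m$: when $\sigma_Y$ is non-summable (the case $\alpha\ge1-2d$, where the regular-variation exponent $(2d-1)/\alpha\ge-1$) the symmetric partial sums diverge to $+\infty$; when $\sigma_Y$ is summable they converge to $\sum_{j\in\mathbb Z}\sigma_Y(j)=2\pi f_Y(0)\ge0$, strictly positive by non-degeneracy of the sampled process at frequency zero. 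In either case a suitable $m$ exists.

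For part (ii) I would bound the numerator and denominator of the ratio on their own scales. For the numerator, fix $j$ and let $k_1$ be the first index with $T_{k_1}\ge j$; the renewal points that are $\ge j$ are then exactly $T_{k_1},\dots,T_n$, and since $a_i\sim C_d i^{d-1}$ with $d-1<0$ yields $|a_i|\le C'(1+i)^{d-1}$ for all $i$, the estimate
\begin{equation*}
|d_{n,j}|\le C'\sum_{k=k_1}^n(1+T_k-j)^{d-1}\le C'\sum_{k=k_1}^n\big(1+(T_k-T_{k_1})\big)^{d-1}\le C'\Big(1+\sum_{p=1}^{n}(1+p)^{d-1}\Big)\le C''n^{d}
\end{equation*}
holds, using $T_k-T_{k_1}\ge k-k_1$. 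As this is uniform in $j$, we get the \emph{deterministic} bound $\sup_{j}d_{n,j}^2\le (C'')^2n^{2d}$. For the denominator, recall $d_n^2=\sigma_\epsilon^{-2}\,\E\big[(\sum_{k=1}^nY_k)^2\mid T\big]$ and $r_n:=\E[d_n^2]=\sigma_\epsilon^{-2}\sum_{k,l=1}^n\sigma_Y(k-l)$. Proposition \ref{PropSigmaY} (and, in the intermediate regime, the uniform integrability of Lemma \ref{technical}(ii),(iv) applied with $r=1-2d<\alpha$) identifies the order of $r_n$: it is comparable to $n^2b_n^{2d-1}$ when $\alpha\ge1-2d$ and to $n$ when $\alpha<1-2d$, and in every case $r_n/n^{2d}\to\infty$ since $0<d<1/2$. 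Moreover $d_n^2/r_n$ converges in distribution to an almost surely positive limit $Z$ — a positive constant in the concentrating very heavy case, and a positive functional of the subordinator $L$ (the randomized variance $\int_{\mathbb R}\big(\int_0^1(L_u-x)_+^{d-1}du\big)^2dx$) in the NVM case. Hence, for every $\epsilon>0$, taking $\gamma_n=(C'')^2n^{2d}/(\epsilon r_n)\to0$,
\begin{equation*}
P\Big(\sup_j\frac{d_{n,j}^2}{d_n^2}>\epsilon\Big)\le P\big(d_n^2<\gamma_n r_n\big)\le P\big(d_n^2<\gamma r_n\big)\longrightarrow P(Z\le\gamma)\xrightarrow[\gamma\downarrow0]{}0,
\end{equation*}
which gives (\ref{lindeberg}).

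The routine part is the deterministic numerator estimate; the main obstacle is the denominator, namely showing that $d_n^2$ is not atypically small, i.e. that the normalized conditional variance $d_n^2/r_n$ converges to a limit with no atom at $0$. This non-degeneracy is precisely what makes the limiting NVM a genuine mixture, and it is here that the uniform-integrability bounds of Lemma \ref{technical} together with the subordinator scaling (\ref{fclt}) carry the weight of the argument.
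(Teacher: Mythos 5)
Your part (i) handles the non-summable case the same way the paper does (positivity of $\sigma_Y$ at lags $\ge m-1$ forces the symmetric partial sums to diverge to $+\infty$), but your summable case rests on the assertion that $\sum_j\sigma_Y(j)=2\pi f_Y(0)$ is ``strictly positive by non-degeneracy of the sampled process at frequency zero''. That assumes exactly what has to be proved: a stationary process can perfectly well have vanishing spectral density at the origin (take $Y_k=W_k-W_{k-1}$ with $W$ i.i.d.), and nothing you invoke excludes this for the sampled process. The paper's Case 1 does supply an argument: nonnegativity of $\textrm{Var}\bigl(M^{-1/2}\sum_{j=1}^M Y_j\bigr)$, i.e.\ of the Fej\'er-weighted sums, gives $\sum_j\sigma_Y(j)\ge 0$, and the value $0$ is then excluded by a contradiction exploiting $\sigma_Y(h)>0$ for $h\ge m-1$. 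Some argument of this kind is required; as written, this step of yours is a hole.

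In part (ii), your numerator estimate $\sup_j|d_{n,j}|\le C n^{d}$ is exactly the paper's Step 1, and the overall reduction (deterministic $O(n^{2d})$ numerator against a denominator of strictly larger order) is also the paper's. The genuine gap is the denominator: you invoke, without proof, that $d_n^2/r_n$ converges in distribution to an almost surely positive limit $Z$. That claim is essentially (\ref{conditional2})--(\ref{conditional3}), i.e.\ the asymptotics of $R_n(T)$, which is the technical core of the proof of Theorem \ref{them1} and occupies several pages there (an ergodic-theorem argument when $\alpha<1-2d$, a delicate variance estimate when $\alpha=1-2d$, a uniform-integrability argument when $1-2d<\alpha<1$ or $\alpha=1$); Proposition \ref{PropSigmaY} and Lemma \ref{technical} by themselves only control the mean $r_n$ and give uniform integrability, not the distributional convergence of the random ratio $d_n^2/r_n$. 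There is also an architectural point: the paper proves Lemma \ref{lem1}(ii) before Proposition \ref{propo_clt} and Theorem \ref{them1}, so a proof of the lemma that imports the $R_n$ asymptotics must verify there is no circularity (there is none, since the $R_n$ analysis never uses the lemma, but your proposal neither proves the claim nor notes this). The paper's own Step 2 shows how little is actually needed: by part (i), all off-diagonal terms $\sigma_X(T_{[nx]}-T_{[ny]})$ with $|[nx]-[ny]|\ge m-1$ are nonnegative and can simply be dropped, and the remaining near-diagonal terms form finitely many $m$-dependent stationary sequences whose law of large numbers gives $\sigma_\epsilon^2 d_n^2\ge n\bigl(\sum_{|j|\le m}\sigma_Y(j)+o(1)\bigr)$ almost surely; since $2d<1$ this already forces $\sup_j d_{n,j}^2/d_n^2\to 0$ a.s., with no need for exact rates of $r_n$ or any distributional limit. (Incidentally, your rate $r_n\asymp n^2b_n^{2d-1}$ is off at the boundary $\alpha=1-2d$, where $r_n\asymp n\sum_{h\le n}b_h^{2d-1}$ with a possibly slowly varying sum, but this is harmless since only $r_n\gg n^{2d}$ is used.)
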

\begin{proof}
The proof is postponed to Appendix.
\end{proof}\noindent
Denote $\textrm{Var}(.\vert T)$ and $\mathbb{E}(.\vert T)$ respectively the conditional variance and expectation given $\Delta_1,\Delta_2,\ldots$.
\begin{prop}\label{propo_clt} Let $X_t$ be the linear process defined in (\ref{lin11})-(\ref{eq:a}) and let $Y_t$ be the sampled process defined in (\ref{sampled1}). 
We have, as $n\to\infty$, 
\begin{equation}\label{conditional1}
S'_n ( X, T):=\left[\textrm{Var}\left(\sum_{k=1}^nY_k\vert T\right)\right]^{-1/2}\left(\sum_{k=1}^nY_k\right)\overset{\mathcal{D}}{\to}\mathcal{N}(0,1).
\end{equation}
\end{prop}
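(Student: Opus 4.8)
The plan is to argue conditionally on the renewal process $T=(\Delta_1,\Delta_2,\ldots)$, which we take independent of the innovation sequence $(\epsilon_j)$. Inserting the moving-average representation (\ref{lin11}) into $\sum_{k=1}^n Y_k=\sum_{k=1}^n X_{T_k}$ and interchanging the (absolutely convergent) sums, I would first record
\begin{equation*}
\sum_{k=1}^n Y_k=\sum_{j\in\mathbb{Z}}\Big(\sum_{k=1}^n a_{T_k-j}\Big)\epsilon_j=\sum_{j\in\mathbb{Z}}d_{n,j}\,\epsilon_j,
\end{equation*}
with the convention $a_m=0$ for $m<0$ and $d_{n,j}$ as in (\ref{dn}). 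Since $d<1/2$ and $a_i\sim C_d i^{d-1}$, the weights $(d_{n,j})_j$ are square-summable for each fixed $n$, so that, given $T$, the partial sum is a finite-variance weighted sum of i.i.d.\ mean-zero, variance-$\sigma_\epsilon^2$ variables and $\mathrm{Var}\big(\sum_{k=1}^n Y_k\mid T\big)=\sigma_\epsilon^2 d_n^2$. The nondegeneracy supplied by Lemma \ref{lem1}(i) ensures $d_n^2>0$ for $n$ large, so the normalization in (\ref{conditional1}) is well defined and $S'_n(X,T)=\sum_{j}c_{n,j}\epsilon_j$ with $c_{n,j}=d_{n,j}/(\sigma_\epsilon d_n)$ has conditional variance $\sigma_\epsilon^2\sum_j c_{n,j}^2=1$.

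Conditionally on $T$ the summands $c_{n,j}\epsilon_j$ form a row-independent array of total variance $1$, so the natural device is the Lindeberg--Feller theorem, and the task reduces to checking its Lindeberg condition. Writing $M_n=\max_j|c_{n,j}|$, a $T$-measurable quantity, and using that $(\epsilon_j)$ is i.i.d.\ and independent of $T$, for every $\delta>0$
\begin{equation*}
L_n(\delta\mid T)=\sum_{j}c_{n,j}^2\,\mathbb{E}\big[\epsilon_0^2\,\mathds{1}_{\{|\epsilon_0|>\delta/|c_{n,j}|\}}\big]
\le\Big(\sum_j c_{n,j}^2\Big)\,\mathbb{E}\big[\epsilon_0^2\,\mathds{1}_{\{|\epsilon_0|>\delta/M_n\}}\big]
=\frac{1}{\sigma_\epsilon^2}\,\mathbb{E}\big[\epsilon_0^2\,\mathds{1}_{\{|\epsilon_0|>\delta/M_n\}}\big].
\end{equation*}
Here the key input is Lemma \ref{lem1}(ii): relation (\ref{lindeberg}) gives $\sup_j d_{n,j}^2/d_n^2\overset{P}{\to}0$, hence $M_n=\sigma_\epsilon^{-1}(\sup_j d_{n,j}^2/d_n^2)^{1/2}\overset{P}{\to}0$ and $\delta/M_n\overset{P}{\to}\infty$. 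As $\mathbb{E}[\epsilon_0^2]=\sigma_\epsilon^2<\infty$, the truncated second moment tends to zero, so $L_n(\delta\mid T)\overset{P}{\to}0$.

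To convert this conditional control into the unconditional limit (\ref{conditional1}) I would pass to characteristic functions. With $\psi(u)=\mathbb{E}[e^{iu\epsilon_0}]$ the conditional characteristic function factorizes as $\phi_n(t\mid T)=\prod_j\psi(tc_{n,j})$, and the standard Lindeberg--Feller estimate bounds $|\phi_n(t\mid T)-e^{-t^2/2}|$ by a quantity that vanishes as soon as $M_n$ and $L_n(\delta\mid T)$ are small; since both are $o_P(1)$, $\phi_n(t\mid T)\overset{P}{\to}e^{-t^2/2}$ for each fixed $t$. Because $|\phi_n(t\mid T)|\le1$, the bounded convergence theorem yields $\mathbb{E}[e^{itS'_n(X,T)}]=\mathbb{E}[\phi_n(t\mid T)]\to e^{-t^2/2}$, and (\ref{conditional1}) follows by L\'evy's continuity theorem. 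I expect the main obstacle to be precisely this transfer: since the weights $c_{n,j}$ are random, the Lindeberg condition only holds in probability and not pathwise, so a deterministic CLT cannot be invoked along a.e.\ realization of $T$; routing the argument through the bounded, $T$-measurable conditional characteristic functions (equivalently, through an a.s.\ extraction along subsequences) is what legitimizes the passage from the conditional statement to the asserted unconditional convergence.
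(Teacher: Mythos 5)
Your proposal is correct, and its skeleton coincides with the paper's proof: the same expansion $\sum_{k=1}^n Y_k=\sum_j d_{n,j}\epsilon_j$, the same key input (Lemma \ref{lem1}(ii)), and the same final transfer from conditional to unconditional convergence via bounded convergence of characteristic functions. Where you genuinely diverge is in how the conditional CLT with \emph{random} weights is justified. The paper extracts, from the in-probability statement (\ref{lindeberg}), an almost-surely convergent further subsequence from every subsequence, and then applies a ready-made CLT for weighted i.i.d.\ sums with \emph{deterministic} coefficients (Corollary 4.3.1 of Giraitis--Koul--Surgailis) pathwise along that subsequence; this yields a.s.\ convergence of the conditional characteristic functions along subsequences, hence convergence in probability for the full sequence. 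You instead verify the conditional Lindeberg condition directly — the bound $L_n(\delta\mid T)\le\sigma_\epsilon^{-2}\,\mathbb{E}\bigl[\epsilon_0^2\ind{\{|\epsilon_0|>\delta/M_n\}}\bigr]$ with $M_n\overset{P}{\to}0$ is exactly right — and then rely on the quantitative character of the Lindeberg--Feller estimates: for fixed $t$, $\bigl|\prod_j\psi(tc_{n,j})-e^{-t^2/2}\bigr|$ is dominated by an explicit expression in $\delta$, $L_n(\delta\mid T)$ and $M_n$, all $T$-measurable, so convergence in probability of these quantities propagates directly to the conditional characteristic function. Your route is self-contained (no external CLT citation) and avoids the subsequence device altogether, at the cost of redoing the classical product-comparison estimates; the paper's route is shorter given the cited result but hides the quantitative content in that reference. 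You correctly identified the one real subtlety — that a deterministic CLT cannot be invoked along a.e.\ realization when the Lindeberg condition holds only in probability — and both your characteristic-function bound and the subsequence extraction you mention as an alternative are legitimate ways to resolve it; the paper chose the latter.
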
\noindent
\begin{proof}
 First, putting $a_u=0$ if $u<0$, we can write
\begin{eqnarray*}
\sum_{k=1}^nY_k&=&\sum_{k=1}^nX_{T_k}=\sum_{k=1}^n\sum_{s=1}^\infty X_s\ind{T_k=s}=\sum_{s=1}^\infty X_s\left(\sum_{k=1}^n\ind{T_k=s}\right)\\
&=&\sum_{s=1}^\infty\left(\sum_{j=-\infty}^sa_{s-j}\epsilon_j\right)\left(\sum_{k=1}^n\ind{T_k=s}\right)
=\sum_{j=-\infty}^\infty\left(\sum_{s=1}^{T_n} a_{s-j}\left(\sum_{k=1}^n\ind{T_k=s}\right)\right)\epsilon_j\\
&&=\sum_{j=-\infty}^\infty\left(\sum_{k=1}^n a_{T_k-j}\right)\epsilon_j
:=\sum_{j=-\infty}^\infty d_{n,j}\epsilon_j.
\end{eqnarray*}
Using the independence of the sequence $(d_{n,j})_{n,j}$ with the sequence $ (\epsilon_j)_{j}$ of iid random variables and the well-known fact that if $X,Y$ are independent then $\mathbb{E}(f(X,Y)\vert Y)=g(Y)$ where $g(y)=\mathbb{E}(f(X,y))$, we have for almost every $\omega$
$$
\E\left( e^{it S'_n(X,T)} \vert T \right)(\omega)=\E\left(\exp\left(it\sum_{j=-\infty}^\infty\frac{d_{n,j}(\omega)}{\sigma_\epsilon d_n(\omega)}\epsilon_j\right)\right),
$$
where $d_n^2 := \sum_{j\in\mathbb{Z}}d^2_{n,j}$. 
From the relationship between convergence in probability and convergence almost sure, we know that (ii) of Lemma \ref{lem1} is equivalent to the fact that every subsequence $n'$ of $n$ contains a further subsequence $n''$ such that as $n''\to\infty$,
\begin{equation*}
\sup_{j\in\mathbb{Z}}\frac{d^2_{n'',j}}{\sum_{j\in\mathbb{Z}}d^2_{n'',j}}\overset{\textrm{a.s.}}{\longrightarrow}0.
\end{equation*} 
Hence, using Corollary 4.3.1. of \cite{surgailis2012large} (stated for deterministic coefficients $d_{nj}$) and (ii) of Lemma \ref{lem1} above, we obtain that every subsequence $n'$ contains a further subsequence $n''$ such that for almost every $\omega$,
\begin{equation}\label{almost sure}
\sum_{j=-\infty}^\infty\frac{d_{n'',j}(\omega)}{\sigma_\epsilon d_n''(\omega)}\epsilon_j\overset{\mathcal{D}}{\to}\mathcal{N}(0,1),
\end{equation}
which in terms of characteristic functions is equivalent to the fact that for every $t\in\mathbb{R}$,
$$ 
\E\left( e^{it S'_{n''}(X,T)} \vert T \right)\overset{\textrm{a.s.}}{\longrightarrow}e^{-\frac{t^2}{2}}.
$$
 or equivalently, 
for every $t\in\mathbb{R}$,
$$ 
\E\left( e^{it S'_n(X,T)} \vert T \right)\overset{\textrm{P}}{\longrightarrow}e^{-\frac{t^2}{2}},
$$
which implies
$$
\E\left(e^{itS'_n(X,T)}\right)\to e^{-\frac{t^2}{2}},
$$
by the Bounded Convergence Theorem,
or equivalently
$$
S_n'(X,T)\overset{\mathcal{D}}{\to}\mathcal{N}(0.1),
$$
which completes the proof of (\ref{conditional1}).
\end{proof}
The following Theorem shows the difference that might exist between the asymptotic behaviour of the sums of the orignal process $X_k$ and the sampled one $Y_k$. It is well known that for linear processes defined in (\ref{lin11}) (see \cite{ibragimov1971independent}, Theorem 18.6.5)
$$
\left[\textrm{Var}\left(\sum_{k=1}^nX_k\right)\right]^{-1/2}\left(\sum_{k=1}^nX_k\right)\overset{\mathcal{D}}{\to}\mathcal{N}(0,1).
$$
\begin{thm}\label{them1}
Assume $X_t$ is a linear process as defined in (\ref{lin11}) and (\ref{eq:a}). 
Let
\begin{equation}\label{clt}
S_n ( X, T) = 
\left[\textrm{Var}\left(\sum_{k=1}^nY_k\right)\right]^{-1/2}\left(\sum_{k=1}^nY_k\right).
\end{equation} \\
(i) if $\mathbb{E}(\Delta_1)<\infty$ or if $P(\Delta_1>k)=k^{-\alpha}\ell(k)$, where $\ell$ is a slowly varying function, and $0<\alpha\le1-2d$ or $\alpha=1$, then as $n\to\infty$,
\begin{equation}\label{clt1}
S_n ( X, T)\overset{\mathcal{D}}{\to}\mathcal{N}(0,1),
\end{equation}
(ii) if $P(\Delta_1>k)=k^{-\alpha}\ell(k)$ such that $1-2d<\alpha<1$ and $\ell$ is a slowly varying function, then as $n\to\infty$,
\begin{equation}\label{clt11}
S_n ( X, T)\overset{\mathcal{D}}{\to}\sqrt{Z(\alpha,d)}N,
\end{equation}
where $N$ has standard normal distribution and
\begin{equation}\label{zed}
Z(\alpha,d)=C_{\alpha,1-2d}
\int_{[01]^2}\vert L_x-L_y\vert^{2d-1}dxdy,
\end{equation}
where $L_t$ is a L\'evy stable motion with parameter $\alpha$, $L_0=0$ and $L_t$ is nondecreasing in $t$ (i.e., $L_t$ is a stable subordinator), which is independent of the Gaussian variable $N$,
and
\begin{equation}\label{down1}
C_{\alpha,1-2d}=
\frac{(\alpha+2d-1)(2\alpha+2d-1)}{2\alpha^2}\frac{1}{\mathbb{E}(L_1^{2d-1})}=
(\alpha+2d-1)(2\alpha+2d-1)\frac{\Gamma(1-2d)}{2\alpha\Gamma\left(\frac{1-2d}{\alpha}\right)}.
\end{equation}
(iii) The distribution of $Z(\alpha,d)$ is determined by its moments and 
\begin{equation}\label{continuity}
Z(\alpha,d)\overset{P}{\to}1,\qquad\textrm{as }\alpha\to1-2d\textrm{ or }\alpha\to1.
\end{equation}
\end{thm}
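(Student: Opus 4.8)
The backbone of the proof is the elementary identity, valid because the quantity $W_n$ below is $\sigma(T)$-measurable,
\begin{equation*}
S_n(X,T)=\sqrt{W_n}\,S'_n(X,T),\qquad W_n:=\frac{\mathrm{Var}\bigl(\sum_{k=1}^nY_k\,\vert\,T\bigr)}{\mathrm{Var}\bigl(\sum_{k=1}^nY_k\bigr)}.
\end{equation*}
Since $\mathbb{E}(\sum_kY_k\,\vert\,T)=0$, the denominator equals $\mathbb{E}$ of the numerator, so $\mathbb{E}(W_n)=1$ for every $n$; and expanding exactly as in the proof of Proposition \ref{propo_clt} gives the representation $\mathrm{Var}(\sum_kY_k\,\vert\,T)=\sigma_\epsilon^2\sum_jd_{n,j}^2=\sum_{k,l=1}^n\sigma_X(\vert T_k-T_l\vert)$, whose expectation is $\sum_{k,l}\sigma_Y(\vert k-l\vert)$. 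Everything therefore reduces to (a) identifying the limit in law $Z$ of $W_n$, and (b) upgrading the conditional CLT of Proposition \ref{propo_clt} into the unconditional statement $S_n\to\sqrt Z\,N$.

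For part (ii) I would prove $W_n\overset{\mathcal{D}}{\to}Z(\alpha,d)$ through the functional limit theorem (\ref{fclt}). After removing the $k=l$ term (which contributes the deterministic $n\,\sigma_X(0)$, negligible against $b_n^{2d-1}n^2$ precisely because $\alpha>1-2d$, i.e. $b_n^{1-2d}=o(n)$), the heuristic
\begin{equation*}
\frac{1}{\tilde C_d\,b_n^{2d-1}n^2}\sum_{k\neq l}\sigma_X(\vert T_k-T_l\vert)\approx\frac{1}{n^2}\sum_{k\neq l}\Bigl\vert\frac{T_k-T_l}{b_n}\Bigr\vert^{2d-1}\overset{\mathcal{D}}{\to}\int_{[0,1]^2}\vert L_x-L_y\vert^{2d-1}\,dx\,dy
\end{equation*}
must be made rigorous; since by Proposition \ref{PropSigmaY} the denominator grows like $\tilde C_d\,\mathbb{E}(L_1^{2d-1})\tfrac{2\alpha^2}{(\alpha+2d-1)(2\alpha+2d-1)}b_n^{2d-1}n^2$ (with $\mathbb{E}(L_1^{2d-1})$ read off from (\ref{inverse_levy})), dividing produces exactly the normalizing constant $C_{\alpha,1-2d}$ of (\ref{down1}) and the limit $Z(\alpha,d)$ of (\ref{zed}). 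The main obstacle is that the functional $\phi\mapsto\int_{[0,1]^2}\vert\phi(x)-\phi(y)\vert^{2d-1}dxdy$ is not continuous on $D[0,1]$, since $2d-1<0$ makes the kernel blow up on the diagonal. I would circumvent this by truncating, replacing $\vert u\vert^{2d-1}$ by $\vert u\vert^{2d-1}\wedge M$ to obtain a bounded functional that is a.s.\ continuous at $L$ and to which (\ref{fclt}) and the continuous mapping theorem apply, using $u(T_k-T_l+1)\overset{a.s.}{\to}\tilde C_d$ on the surviving (macroscopic) region to replace $u$ by $\tilde C_d$; the truncation is then removed by letting $M\to\infty$, the tail being uniformly (in $n$) small in $L^1$ thanks to the bound $\sigma_X(h)\le K(h+1)^{2d-1}$ from (\ref{exact1}) together with the uniform integrability of Lemma \ref{technical}(ii). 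The consistency requirement $\mathbb{E}(Z)=1$ keeps this bookkeeping honest.

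With $W_n\overset{\mathcal{D}}{\to}Z$ in hand, I would finish through the conditional characteristic function $\psi_n(t):=\mathbb{E}(e^{itS'_n}\,\vert\,T)$: since $\sqrt{W_n}$ is $\sigma(T)$-measurable,
\begin{equation*}
\mathbb{E}\bigl(e^{itS_n}\bigr)=\mathbb{E}\bigl(\psi_n(t\sqrt{W_n})\bigr)=\mathbb{E}\bigl(e^{-t^2W_n/2}\bigr)+\mathbb{E}\bigl(\psi_n(t\sqrt{W_n})-e^{-t^2W_n/2}\bigr).
\end{equation*}
The first term converges to $\mathbb{E}(e^{-t^2Z/2})=\mathbb{E}(e^{it\sqrt Z\,N})$ by bounded convergence; the second vanishes because, along the a.s.-convergent subsequences already extracted in the proof of Proposition \ref{propo_clt}, $\psi_n\to e^{-(\cdot)^2/2}$ uniformly on compacts (automatic for characteristic functions converging to a continuous limit), while the tightness of $W_n$ controls $\{W_n>M\}$. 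This yields $S_n\overset{\mathcal{D}}{\to}\sqrt Z\,N$ with $N$ standard normal independent of $Z$, the independence being exactly the content of the conditional CLT whose Gaussian limit does not depend on $T$. Part (i) is the same scheme with $Z\equiv1$: if $\mathbb{E}(\Delta_1)<\infty$ the renewal law of large numbers gives $\vert T_k-T_l\vert\sim\mathbb{E}(\Delta_1)\vert k-l\vert$ and hence $W_n\overset{P}{\to}1$; if $\alpha\le1-2d$ the deterministic diagonal now dominates ($b_n^{1-2d}$ is no longer $o(n)$) and a second-moment estimate gives $W_n\overset{P}{\to}1$; and if $\alpha=1$ the normalized renewal process degenerates to the line by (\ref{harmonic}), so $L_x=x$, the integral functional is a.s.\ constant, and $Z\equiv1$.

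Finally, for part (iii) I would verify moment-determinacy by Carleman's criterion: expanding $\mathbb{E}(Z^p)$ as a $2p$-fold integral of $\mathbb{E}(\prod_{i=1}^p\vert L_{x_i}-L_{y_i}\vert^{2d-1})$ and using $2d-1<0$ with the independence and self-similarity of the increments of $L$ (each factor bounded below by a single independent increment, whose negative moments are finite by (\ref{inverse_levy})) yields a bound of the form $\mathbb{E}(Z^p)\le C^p\,p!$, which makes $\sum_p\mathbb{E}(Z^p)^{-1/(2p)}=\infty$. Both continuity statements are then instances of degeneration to a constant, for which—since $\mathbb{E}(Z)\equiv1$—it suffices to show $\mathrm{Var}(Z(\alpha,d))\to0$ at each endpoint. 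As $\alpha\to1$ the stable subordinator converges to the deterministic line, so $\int_{[0,1]^2}\vert L_x-L_y\vert^{2d-1}\,dx\,dy\to\int_{[0,1]^2}\vert x-y\vert^{2d-1}\,dx\,dy=\frac{1}{d(2d+1)}$ while $C_{\alpha,1-2d}\to d(2d+1)$, forcing $Z\to1$; as $\alpha\downarrow1-2d$ the exponent $(2d-1)/\alpha\downarrow-1$ renders the diagonal singularity non-integrable, and after normalization (whose prefactor carries the compensating factor $\alpha+2d-1\to0$) the fluctuations of the limit wash out, again giving $Z\overset{P}{\to}1$. The delicate estimate throughout part (iii) is the uniform control of the mixed negative moments of $L$ near the boundary $\alpha=1-2d$.
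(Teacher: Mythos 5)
Your skeleton coincides with the paper's: factor $S_n(X,T)=\sqrt{R_n(T)}\,S'_n(X,T)$ with $R_n(T)=W_n$ the variance ratio, identify the weak limit of $R_n(T)$, and upgrade the conditional CLT of Proposition \ref{propo_clt}. Your part (ii) is sound in substance — the truncation $\vert u\vert^{2d-1}\wedge M$ plus the uniform integrability of Lemma \ref{technical}(ii) is a hands-on substitute for the paper's appeal to Cremers--Kadelka convergence of integral functionals, and your conditional characteristic-function argument for asymptotic independence is a legitimate variant of the paper's conditional distribution-function argument. The moment-determinacy claim in (iii) also goes through essentially as in the paper (generalized Minkowski plus Gamma-function asymptotics give $\mathbb{E}(Z^k)\le C^k (k!)^c$ with $c=\frac{1-2d}{\alpha}-(1-2d)<1$, so Carleman applies). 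The first genuine gap is part (i) in the regime $0<\alpha\le 1-2d$. Your claim that ``the deterministic diagonal now dominates'' is false: when $\alpha<1-2d$ the series $\sum_h\mathbb{E}\vert\sigma_X(T_h)\vert$ converges, so $\mathrm{Var}\left(\sum_k Y_k\right)\sim n\left(\sigma_X(0)+2\sum_{h\ge1}\mathbb{E}\sigma_X(T_h)\right)$ and the off-diagonal contributes at exactly the same order $n$ as the diagonal; the real task is concentration of the conditional variance at this linear scale. The paper gets this by approximating $Z_{nk}=\sum_{h\le n-k}\sigma_X(\Delta_{k+1}+\cdots+\Delta_{k+h})$ by the stationary ergodic sequence $Z_k$ and invoking the ergodic theorem; your unspecified ``second-moment estimate'' would at minimum require showing that $\mathrm{Cov}(Z_{nk},Z_{nk'})$ decays in $\vert k-k'\vert$, which you do not do. Worse, at the boundary $\alpha=1-2d$ with $b_n^{-\alpha}$ non-summable the variance grows like $nV_n$ with $V_n=\sum_{h\le n}b_h^{-\alpha}\to\infty$ slowly varying, so the diagonal $n\sigma_X(0)$ is in fact \emph{negligible}, and the paper needs the delicate uniform estimate $\max_{k\le n}\mathrm{Var}(Z'_{n,k})\to0$ (the $I(\epsilon,n)$, $II(\epsilon,n)$ decomposition). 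Your proposal does not address this boundary case at all, and it cannot be absorbed into either of your two heuristics.

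The second gap is the continuity statement (\ref{continuity}) as $\alpha\downarrow 1-2d$. You correctly reduce both endpoints to $\mathrm{Var}(Z(\alpha,d))\to0$ (since $\mathbb{E}(Z(\alpha,d))=1$), and at $\alpha\uparrow1$ your degeneration argument can be made rigorous exactly as in the paper via Cauchy--Schwarz and $\mathrm{Var}(L_1^{-r})\to0$. But at $\alpha\downarrow 1-2d$ the sentence ``the fluctuations of the limit wash out'' is a hope, not a proof: one must show that the prefactor $C^2_{\alpha,r}=O\left(\left(1-\tfrac r\alpha\right)^2\right)$ beats the divergence of $\int_{[0,1]^4}\mathbb{E}\left[(L_{s_2}-L_{s_1})^{-r}(L_{t_2}-L_{t_1})^{-r}\right]$, which blows up at rate $\left(1-\tfrac r\alpha\right)^{-1}$ at this endpoint. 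The paper does this by splitting the domain into nested ($s_1<t_1<t_2<s_2$) and interlaced ($t_1<s_1<t_2<s_2$) configurations, using stationarity and independence of the increments of $L$ to reduce each to explicit beta-type integrals of order $O\left(\left(1-\tfrac r\alpha\right)^{-1}\right)$, so that multiplication by $C^2_{\alpha,r}$ kills both terms. Some uniform-in-$\alpha$ control of the mixed negative moments near the diagonal of this kind is indispensable; you flag it as ``the delicate estimate'' but supply nothing in its place, and it is precisely where the difficulty of part (iii) lies.
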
\vspace{1cm}\noindent
A graph summary of the limiting distribution's nature as a function of $d$ and $\alpha$. (The area $\alpha>1$ denotes the class of renewal processes with finite moment, without being necessarily of heavy tail).\\

\begin{center}
\begin{tikzpicture}
\begin{axis}[
 axis lines = left,
 xlabel = Memory parameter \(\Large d\),
 ylabel =Tail index \(\alpha\),
]
\draw[thick,gray] (0,1) -- (0,2);
\addplot [
 domain=0:.5, 
 samples=100, 
 color=red,
 line width=1pt
] 
{1-2*x};
\node[blue] at (350, 70) {NVM};
\node[blue] at (120, 30) {$\mathcal{N}(0,1)$};
\node[blue] at (195,140) {$\mathcal{N}(0,1)$};
\addplot[thick,smooth] coordinates {(.5,0)(.5,2)};
\fill[pattern=north west lines, pattern color=yellow] (axis cs: 0,1) -- (axis cs: 0.5,0) -- (axis cs: 0.5,1) -- cycle;
\fill[pattern=north west lines, pattern color=red] (axis cs: 0,0) -- (axis cs: 0.5,0) -- (axis cs: 0,1) -- cycle;
\fill[pattern=north west lines, pattern color=red] (axis cs: 0,1) -- (axis cs: 0.5,1) -- (axis cs: 0.5,2) -- (axis cs: 0,2) -- cycle;
\addplot [
 domain=0:1/2, 
 samples=100, 
 color=red,
 line width=1pt
]{1};
\addplot [
 domain=0:1/2, 
 samples=100, 
 color=black,
]{2};
\end{axis}
\end{tikzpicture}
\end{center}
{\bf Proof of Theorem \ref{them1}}. It is organized as follows.
Using Proposition \ref{propo_clt}, to establish (i) and (ii), we will prove\\
(i) If $\mathbb{E}(\Delta_1)<\infty$ or $P(\Delta_1>k)=k^{-\alpha}\ell(k)$ where $0<\alpha\le 1-2d$ or if $\alpha=1$, then 
\begin{equation}\label{conditional2}
R_n(T):=\frac{\textrm{Var}\left(\sum_{k=1}^nY_k\vert T\right)}{
\textrm{Var}\left(\sum_{k=1}^nY_k\right)}
\overset{P}{\to}1.
\end{equation}
To get this result, the techniques are different depending on the assumption on $\Delta_1$.\\
(ii) If $P(\Delta_1>k)\sim k^{-\alpha}$, with $1-2d<\alpha<1$, then 
\begin{equation}\label{conditional3}
(R_n(T),S'_n ( X, T))\overset{\mathcal{D}}{\to}(Z(d,\alpha),N),
\end{equation}
since $S_n(X,T)=\sqrt{R_n(T)}S'_n(X,T).$\\
(iii) We will show that
\begin{equation}\label{moments_problem}
\sum_{k=1}^\infty\left(\mathbb{E}\left(Z(\alpha,d))^k\right)\right)^{\frac{-1}{k}}=\infty,
\end{equation}
to guarantee that $Z(\alpha,d)$'s distribution is determined by its moments
and prove (\ref{continuity}). 
\\\\
In what follows, $C$ will denote a generic positive constant that may change from one expression to another.\\
{\bf Proof of (\ref{conditional2}) when $\bm{\mathbb{E}(\Delta_1)<\infty}$.} 
As already mentioned, the linear processes with coefficients defined in (\ref{eq:a}) have a covariance function satisfying (\ref{exact1}), that is 
\begin{equation*}
\sigma_X(h)=u(h+1)(h+1)^{2d-1},
\end{equation*} 
with $u(h)\to \tilde C_d$ as $h\to\infty$ where $\tilde C_d$ is defined in (\ref{new C}) and $\vert u\vert$ is bounded. Note that for $x>y$, we have $(T_{[nx]}-T_{[ny]}+1)/n\ge([nx]-[ny]+1)/n\ge(x-y)$ and hence for all $x\neq y$, 
$$
\frac{\vert\sigma_X(T_{[nx]}-T_{[ny]})\vert}{n^{2d-1}}=\vert u(\vert T_{[nx]}-T_{[ny]}\vert+1)\vert
\left(\frac{\vert T_{[nx]}-T_{[ny]}\vert+1}{n}\right)^{2d-1}
\le C\vert x-y\vert^{2d-1}
$$
and 
\begin{equation}\label{value in d}
\int_{[0,1]^2}\vert x-y\vert^{2d-1}dxdy=\frac{1}{d(2d+1)}\in(0,\infty),
\end{equation}
and by the Law of Large Numbers, for all $x\neq y$,
$$
\left(\frac{\vert T_{[nx]}-T_{[ny]}\vert+1}{n}\right)^{2d-1}\overset{a.s.}{\to}\left(\mathbb{E}(\Delta_1)\right)^{2d-1}
\vert x-y\vert^{2d-1}>0.
$$
Then applying Lebesgue Dominated Convergence Theorem to both the numerator and the denominator below, we get
\begin{eqnarray*}
R_n(T)&=&\frac{n^2\int_{[0,1]^2}\sigma_X(T_{[nx]}-T_{[ny]})dxdy}
{n^2\int_{[0,1]^2}\mathbb{E}\left(\sigma_X(T_{[nx]}-T_{[ny]})\right)dxdy}
=\frac{\int_{[0,1]^2}\frac{\sigma_X(T_{[nx]}-T_{[ny]})}{n^{2d-1}}dxdy}
{\int_{[0,1]^2}\mathbb{E}\left(\frac{\sigma_X(T_{[nx]}-T_{[ny]})}{n^{2d-1}}\right)dxdy}\\
&=&\frac{\int_{[0,1]^2}u(\vert T_{[nx]}-T_{[ny]}\vert+1)\left(\frac{\vert T_{[nx]}-T_{[ny]}\vert+1}{n}\right)^{2d-1}dxdy}
{\int_{[0,1]^2}\mathbb{E}\left[u(\vert T_{[nx]}-T_{[ny]}\vert+1)\left(\frac{\vert T_{[nx]}-T_{[ny]}\vert+1}{n}\right)^{2d-1}\right]dxdy}\overset{a.s.}{\to}\frac{\left(\mathbb{E}(\Delta_1)\right)^{2d-1}/(d(2d+1))}{\left(\mathbb{E}(\Delta_1)\right)^{2d-1}/(d(2d+1))}=1,
\end{eqnarray*}
which completes the proof of (\ref{conditional2}) under the finite moment assumption
$\mathbb{E}(\Delta_1)<\infty$.\\\\
{\bf Proof of (\ref{conditional2}) when $\bm{P(\Delta_1>k)=k^{-\alpha}\ell(k)}$ and $\bm{\alpha<1-2d}$ or ($\bm{\alpha=1-2d}$ and $\bm{b_n^{-\alpha}}$ summable)}.
An example of the last case is when $\ell(n)=(\ln(e-1+n))^2$, since from (\ref{bn}), $b_n^{-\alpha}=\left(n\ell(b_n)\right)^{-1}$ and $b_n>n$ and hence $b_n^{-\alpha}\le n^{-1}(\ln(e-1+n))^{-2}$ whose sum is clearly bounded by $e$.
Note that $b_n$ is the inverse function of a nondecreasing regularly varying function with coefficient $\alpha$ and hence $b_n$ is of the form $b_n=n^{1/\alpha}z(n)$ where $z$ is a slowly varying function (at infinity) and hence for $\alpha<1-2d$, $b_n^{2d-1}$ will also be summable. Since for $n\ge2$, $T_n\ge M_n+1$, we conclude from (i) of Lemma \ref{technical} that
$\mathbb{E}(T_n^{2d-1})=O(b_n^{2d-1})$ and 
hence $\mathbb{E}(T_n^{2d-1})$ is summable. That is, in both cases, when $\alpha<1-2d$ or when $\alpha=1-2d$ but $b_n^{-\alpha}$ remains summable, we obtain 
 \begin{equation}\label{defined}
 \sum_{h=1}^\infty\mathbb{E}(\vert\sigma_X(T_h))\vert)<C\sum_{h=1}^\infty\mathbb{E}(T_h^{2d-1})<\infty.
\end{equation}
\begin{eqnarray*}
\frac{1}{n}\textrm{Var}\left(\sum_{k=1}^nY_k\right)&=&\frac{1}{n}\sum_{k=1}^n\sum_{k'=1}^n\mathbb{E}\left(\sigma_X(T_k-T_{k'})\right)\\
&=&\sigma_X(0)+2\sum_{h=1}^n\left(1-\frac{h}{n}\right)\mathbb{E}(\sigma_X(T_h))
\to \sigma_X(0)+2\sum_{h=1}^\infty\mathbb{E}(\sigma_X(T_h)).
\end{eqnarray*}
We have
$$
\frac{1}{n}\sum_{k=1}^n\sum_{k'=1}^n\sigma_X(T_k-T_{k'})=\sigma_X(0)+
\frac{1}{n}\sum_{k=1}^n\left(2\sum_{h=1}^{n-k}\sigma_X(\Delta_{k+1}+\cdots+\Delta_{k+h})\right).
$$ 
For all $n$ and $k\le n$, let
$$
Z_{nk}:=\sum_{h=1}^{n-k}\sigma_X(\Delta_{k+1}+\cdots+\Delta_{k+h})
$$
and for all $k\ge1$, let
$$
Z_k:=\sum_{h=1}^\infty\sigma_X(\Delta_{k+1}+\cdots+\Delta_{k+h})
$$
which is well defined in $L^1$ according to (\ref{defined}), and
\begin{equation}\label{grosse}
\mathbb{E}(Z_k)=\mathbb{E}(Z_1)=\sum_{h=1}^\infty\mathbb{E}(\sigma_X(T_h)).
\end{equation}
Also, combining Cesaro Lemma and the fact that the remainder term of a converging series converges to zero, we get
$$
\frac{1}{n}\sum_{k=1}^n\mathbb{E}\left(\vert Z_k-Z_{nk}\vert\right)\le
\frac{1}{n}\sum_{k=1}^n\sum_{h=k}^\infty\mathbb{E}\left(\vert\sigma_X(T_h)\vert\right)\to0
$$
and hence
\begin{equation}\label{inL1}
\left(
\frac{1}{n}\sum_{k=1}^nZ_{nk}\right)-\left(\frac{1}{n}\sum_{k=1}^nZ_k\right)\overset{L^1}{\to}0.
\end{equation}
Note also in passing that the process $Z=(Z_k)_{k\ge1}$ is stationary and ergodic (see \cite{billingsley2017probability}, Theorem 36.4) as it is obtained as $Z_k=\Psi\circ B^k(\Delta)$ where $\Delta=(\Delta_k)_{k\ge1}$ is an i.i.d. process, $B$ is the backshift operator and
$$
\Psi:[1,\infty)^\mathbb{N}\to\mathbb{R},\qquad \Psi(x)=\sum_{h=1}^\infty\sigma_X(x_1+\cdots +x_h)
$$
is clearly measurable. Therefore by The Erogodic Theorem we have
$$
\frac{1}{n}\sum_{k=1}^nZ_k\overset{a.s}{\to}\mathbb{E}(Z_1).
$$
which implies, using (\ref{inL1}) and (\ref{grosse}) that
$$
\frac{1}{n}\sum_{k=1}^nZ_{nk}\overset{P}{\to}\sum_{h=1}^\infty\mathbb{E}(\sigma_X(T_h)).
$$
 This completes the proof of (\ref{conditional2}) under the extreme heavy tail assumption $P(T_1>k)=k^{-\alpha}\ell(k)$, $0<\alpha<1-2d$ or when $\alpha=1-2d$ but $b_n^{-\alpha}$ remains summable. \\ 
{\bf Proof of (\ref{conditional2}) when $\bm{P(\Delta_1>k)=k^{-\alpha}\ell(k)}$, $\bm{\alpha=1-2d}$ and $\bm{b_n^{-\alpha}}$ is not summable.} \\
Rewrite $b_h^{-\alpha}$ as 
$$
b_h^{-\alpha}=\frac{1}{h\ell(b_h)}=\frac{1}{h+1}\frac{h+1}{h}\frac{1}{\ell(b_h)}:=\frac{g(h)}{h+1}, 
$$
where $g(h)$ is a positive locally bounded slowly varying function at infinity.
According to Proposition \ref{PropSigmaY}, 
we obtain that 
\begin{equation}\label{asymp1}
 \mathbb{E}(\sigma_X(T_h))=\sigma_Y(h)=b_h^{-\alpha}g_1(h)=\frac{g(h)g_1(h)}{h+1},
 \end{equation}
 and $g_1(h)\to\tilde C_d\mathbb{E}(L_1^{-\alpha})$ as $h\to\infty$. \\
Also, clearly from (i) of Lemma \ref{technical}, taking $r=2(1-2d)$, we have
 \begin{equation}
 \left(\mathbb{E}(\sigma^2_X(T_h))\right)^{\frac{1}{2}}\le C\frac{g(h)}{h+1},
 \end{equation}
Therefore, with
 $$
 V_n=\sum_{h=1}^nb_h^{-\alpha}=\sum_{h=1}^n\frac{g(h)}{h+1},
 $$
 $V_n\to\infty$ by assumption, and increasing (for $n\ge h_0$, for certain $h_0>0$). We get 
 \begin{eqnarray*}\lefteqn{
 \frac{1}{nV_n}\sum_{k=1}^n\sum_{k'=1}^n\mathbb{E}\left(\sigma_X(T_k-T_{k'})\right)=\frac{\sigma_X(0)}{V_n}+\frac{2}{V_n}\sum_{h=1}^n\left(1-\frac{h}{n}\right)\mathbb{E}(\sigma_X(T_h))}\\
 &&=
 \frac{\sigma_X(0)}{V_n}+\frac{2}{V_n}\sum_{h=1}^n\left(1-\frac{h}{n}\right)b_h^{-\alpha}g_1(h)\\
 &&\to2\tilde C_d\mathbb{E}(L_1^{2d-1})>0,\qquad\textrm{as }n\to\infty.
 \end{eqnarray*}
 Also, using the properties of slowly varying functions, we obtain by Karamata Theorem that $g(n)/V_n\to0$ as $n\to\infty$ and since $V_n\to\infty$ (and increasing for $n\ge h_0$), we can easily check that 
 \begin{equation}\label{maxout}
 \frac{\underset{h\le n}{\max}\,\,g(h)}{V_n}\to0,\qquad\textrm{as }n\to\infty.
 \end{equation}
 Let
 $$
 Z'_{n,k}=\frac{1}{V_n}\sum_{h=1}^{n-k}\sigma_X(\Delta_{k+1}+\cdots+\Delta_{k+h}).
 $$
 In order to show that (\ref{conditional2}) still holds when $\alpha=1-2d$ and $V_n\to\infty$, it will be enough to show that, as $n\to\infty$,
 \begin{equation}\label{uniform_kk}
 \underset{k\le n}{\max}\textrm{Var}(Z'_{n,k})\to0.
 \end{equation} 
 Indeed, this will clearly imply that
 $$
 \underset{k\le n}{\max}\,\mathbb{E}\left\vert Z_{n,k}'-\mathbb{E}(Z_{n,k}')\right\vert\to0
 $$
and then we have
 \begin{eqnarray*}\lefteqn{ 
 \mathbb{E}\left\vert\left(\frac{1}{nV_n}\sum_{k=1}^n\sum_{k'=1}^n\mathbb{E}\left(\sigma_X(T_k-T_{k'})\right)\right)^{-1}
 \frac{1}{nV_n}\sum_{k=1}^n\sum_{k'=1}^n\sigma_X(T_k-T_{k'})-1\right\vert}\\
 &&=\left(\frac{1}{nV_n}\sum_{k=1}^n\sum_{k'=1}^n\mathbb{E}\left(\sigma_X(T_k-T_{k'})\right)\right)^{-1}
 \frac{1}{nV_n}\mathbb{E}\left\vert\left[\sum_{k=1}^n\sum_{k'=1}^n\sigma_X(T_k-T_{k'})-\sum_{k=1}^n\sum_{k'=1}^n\mathbb{E}\left(\sigma_X(T_k-T_{k'})\right)\right]\right\vert\\
 &&=
 \left(\frac{1}{nV_n}\sum_{k=1}^n\sum_{k'=1}^n\mathbb{E}\left(\sigma_X(T_k-T_{k'})\right)\right)^{-1}\frac{1}{n}\mathbb{E}\left\vert\left[\sum_{k=1}^n\left(Z_{n,k}'-\mathbb{E}(Z_{n,k}'\right)\right]\right\vert\\
 &&\le\left(\frac{1}{nV_n}\sum_{k=1}^n\sum_{k'=1}^n\mathbb{E}\left(\sigma_X(T_k-T_{k'})\right)\right)^{-1}\frac{1}{n}\sum_{k=1}^n\mathbb{E}\left\vert Z_{n,k}'-\mathbb{E}(Z_{n,k}')\right\vert\to0.
 \end{eqnarray*}
 Let us now prove (\ref{uniform_kk}). 
 We also note that $\vert\sigma_X(h)\vert\le\sigma_X(0)$, and for all $h\ge h_0$, $\sigma_X(T_h)\ge0$, and 
 where $g$ satisfies (\ref{maxout}).
 Let $\epsilon>0$.
We can write for $n$ such that $\sqrt{V_n}>m$ (where $m$ is as in (i) of Lemma \ref{lem1}),
\begin{eqnarray*}\lefteqn{
\textrm{Var}(Z_{n,k}')=\frac{n^2}{(V_n)^2}\int_0^{1-\frac{k}{n}}\int_0^{1-\frac{k}{n}}\left[\mathbb{E}\left(\sigma_X(T_{[ns]})\sigma_X(T_{[nt]})\right)-\mathbb{E}\left((\sigma_X(T_{[ns]})\right)\mathbb{E}\left(\sigma_X(T_{[nt]})\right)\right]dsdt}\\
&&=
\frac{n^2}{(V_n)^2}\int_0^{\frac{(V_n)^{\frac{1}{2}}}{n}}\int_0^{\frac{(V_n)^{\frac{1}{2}}}{n}}\left[\mathbb{E}\left(\sigma_X(T_{[ns]})\sigma_X(T_{[nt]})\right)-\mathbb{E}\left((\sigma_X(T_{[ns]})\right)\mathbb{E}\left(\sigma_X(T_{[nt]})\right)\right]dsdt\\
&&+\frac{2n^2}{(V_n)^2}\int_{\left\{\frac{(V_n)^{\frac{1}{2}}}{n}<s<t<1-\frac{k}{n}\right\}}\left[\mathbb{E}\left(\sigma_X(T_{[ns]})\sigma_X(T_{[nt]})\right)-\mathbb{E}\left((\sigma_X(T_{[ns]})\right)\mathbb{E}\left(\sigma_X(T_{[nt]})\right)\right]dsdt\\
&&\le\frac{2\sigma^2_X(0)}{V_n}+ 
\frac{2n^2}{(V_n)^2}\int_\frac{(V_n)^{\frac{1}{2}}}{n}^{1-\frac{k}{n}}\int_\frac{(V_n)^{\frac{1}{2}}}{n}^{\epsilon t}\left[\mathbb{E}\left(\sigma_X(T_{[ns]})\sigma_X(T_{[nt]})\right)-\mathbb{E}\left((\sigma_X(T_{[ns]})\right)\mathbb{E}\left(\sigma_X(T_{[nt]})\right)\right]dsdt\\
&&+
\frac{2n^2}{(V_n)^2}\int_\frac{(V_n)^{\frac{1}{2}}}{n}^{1-\frac{k}{n}}\int_{\epsilon t}^t\left(\mathbb{E}(\sigma^2_X(T_{[ns]}))\mathbb{E}(\sigma^2_X(T_{[nt]}))\right)^{\frac{1}{2}}dsdt
:=\frac{2\sigma^2_X(0)}{V_n}+I(\epsilon,n)+II(\epsilon,n).
\end{eqnarray*}
Noting that 
$$
\frac{1}{1+nu}\le\frac{1}{1+[nu]}\le\frac{1}{nu}, 
$$
we obtain that for every (small) $\epsilon>0$,
\begin{eqnarray*}
II(\epsilon,n)&=&\frac{1}{(V_n)^2}\int_\frac{(V_n)^{\frac{1}{2}}}{n}^{1-\frac{k}{n}}n\left(\int_{\epsilon t}^t\frac{n}{1+[ns]}g([ns])g_1([ns])ds\right)\left(\mathbb{E}(\sigma^2_X(T_{[nt]}))\right)^{\frac{1}{2}}dt\\
&\le&C\left(\frac{\underset{h\le n}{\max}\,\,g(h)}{V_n}\right)\frac{1}{V_n}\int_\frac{1}{n}^1n\left(\int_{\epsilon t}^t\frac{1}{s}ds\right)\left(\mathbb{E}(\sigma^2_X(T_{[nt]}))\right)^{\frac{1}{2}}dt\\
&=&C\left(\frac{\underset{h\le n}{\max}\,\,g(h)}{V_n}\right)(-\log\epsilon)\frac{n}{V_n}
\left(\int_{\frac{1}{n}}^1\left(\mathbb{E}(\sigma^2_X(T_{[nt]}))\right)^{\frac{1}{2}}dt\right)\\
&\le&\left(-\log\epsilon\right)C\left(\frac{\underset{h\le n}{\max}\,\,g(h)}{V_n}\right)\to0,
\end{eqnarray*}
as $n\to\infty.$\\
Since $T_h\ge h$ for all $h\ge1$, we have for 
$$t\in\left[\frac{(V_n)^{\frac{1}{2}}}{n},1-\frac{k}{n}\right],
$$
$$
\underset{j\ge\sqrt{V_n}}{\min}u(j)\le u(T_{[nt]})\le\underset{j\ge\sqrt{V_n}}{\max}u(j),
$$
and hence, since $\sigma_X(h)=u(h+1)^{-\alpha} u(h+1)$,
$$
\underset{j\ge\sqrt{V_n}}{\min}u(j)T_{[nt]}^{-\alpha}\le\sigma_X(T_{[nt]})\le \underset{j\ge\sqrt{V_n}}{\max}u(j)T_{[nt]}^{-\alpha}\le\underset{j\ge\sqrt{V_n}}{\max}u(j)\left(T_{[nt]}-T_{[n\epsilon t]}\right)^{-\alpha}.
$$
Using the stationarity and independence of the increments of $T_h$, and the fact that for $x>y$, $[x-y]\le[x]-[y]$, we obtain for $0<\epsilon<1/2$
\begin{eqnarray*}\lefteqn{
I(\epsilon, n)\le}\\
&&\frac{2n^2}{(V_n)^2}\int_\frac{(V_n)^{\frac{1}{2}}}{n}^{1-\frac{k}{n}}
\int_\frac{(V_n)^2}{n}^{\epsilon t}
\left[\underset{j\ge\sqrt{V_n}}{\max}u(j)\mathbb{E}\left(\sigma_X(T_{[ns]})T^{-\alpha}_{[nt(1-\epsilon)]}\right)-\mathbb{E}\left(\sigma_X(T_{[ns]})\right)\underset{j\ge\sqrt{V_n}}{\min}u(j)\mathbb{E}\left(T_{[nt]}^{-\alpha}\right)\right]dsdt\\
&&=
\frac{2n^2}{(V_n)^2}\int_\frac{(V_n)^{\frac{1}{2}}}{n}^{1-\frac{k}{n}}
\int_\frac{(V_n)^2}{n}^{\epsilon t}
\mathbb{E}\left(\sigma_X(T_{[ns]})\right)ds
\left[\underset{j\ge\sqrt{V_n}}{\max}u(j)\mathbb{E}\left(T^{-\alpha}_{[nt(1-\epsilon)]}\right)-\underset{j\ge\sqrt{V_n}}{\min}u(j)\mathbb{E}\left(T_{[nt]}^{-\alpha}\right)\right]dt\\
&&\le
\frac{2n^2}{(V_n)^2}\int_\frac{(V_n)^{\frac{1}{2}}}{n}^1
\int_\frac{h_0}{n}^1
\mathbb{E}\left(\sigma_X(T_{[ns]})\right)ds
\left[\underset{j\ge\sqrt{V_n}}{\max}u(j)\mathbb{E}\left(T^{-\alpha}_{[nt(1-\epsilon)]}\right)-\underset{j\ge\sqrt{V_n}}{\min}u(j)\mathbb{E}\left(T_{[nt]}^{-\alpha}\right)\right]dt\\
&&\le\frac{2n}{V_n}\int_\frac{(V_n)^{\frac{1}{2}}}{n}^1\left[\underset{j\ge\sqrt{V_n}}{\max}u(j)\mathbb{E}\left(T^{-\alpha}_{[nt(1-\epsilon)]}\right)-\underset{j\ge\sqrt{V_n}}{\min}u(j)\mathbb{E}\left(T_{[nt]}^{-\alpha}\right)\right]dt\\
&&=
\frac{2n}{V_n}\int_\frac{(V_n)^{\frac{1}{2}}}{n}^1
\left[\underset{j\ge\sqrt{V_n}}{\max}u(j)\frac{1}{1+[nt(1-\epsilon]}g_1([nt(1-\epsilon])-\underset{j\ge\sqrt{V_n}}{\min}u(j)\frac{1}{1+[nt]}g_1([nt])\right]dt\\
&&\le2M\Bigg[\frac{1}{V_n}\left(\int_{\frac{(V_n)^{\frac{1}{2}}}{n}}^1\frac{n}{nt(1-\epsilon)}\right)\left(\underset{j\ge\sqrt{V_n}}{\max}u(j)\underset{i\ge\sqrt{V_n}/2}{\max}g_1(i)\right)\\
&&-\frac{1}{V_n}\left(\int_{\frac{(V_n)^{\frac{1}{2}}}{n}}^1\frac{n}{1+nt}\right)\left(\underset{j\ge\sqrt{V_n}}{\min}u(j)\underset{i\ge\sqrt{V_n}}{\min}g_1(i)\right)\Bigg]\\
&&\le
\frac{1}{1-\epsilon}\left(\underset{j\ge\sqrt{V_n}/2}{\max}u(j)\right)\left(\underset{i\ge\sqrt{V_n}}{\max}g_1(i)\right)\\
&&-\left(\frac{\log(1+n)}{V_n}-\frac{\log(1+\sqrt{V_n})}{V_n}\right)\left(\underset{j\ge\sqrt{V_n}}{\min}u(j)\right)\left(\underset{i\ge\sqrt{V_n}}{\min}g_1(i)\right)\\
&&\underset{n\to\infty}{\to}\left(\frac{1}{1-\epsilon}-1\right)\tilde C_d\,\mathbb{E}\left(L_1^{2d-1}\right),
\end{eqnarray*}
for $\epsilon<1/2$ and hence
$$
\underset{n\to\infty}{\limsup}\,\,I(\epsilon,n)\le2\epsilon\,C\, \mathbb{E}\left(L_1^{-1}\right).
$$
This completes the proof of (\ref{uniform_kk}).\\
{\bf Proof of (\ref{conditional2}) when $\bm{\alpha=1}$.} 
Normalizing by $(\ell(b_n)/(\ell^*(b_n)b_n))^{2d-1}$, we can rewrite $R_n(T)$ defined in (\ref{conditional2}) as
\begin{eqnarray}
R_n(T)&=&\frac{\int_{[0,1]^2}u(\left\vert T_{[nx]}-T_{[ny]}\right\vert+1)\left(\frac{\ell(b_n)}{\ell^*(b_n)}\frac{\left\vert T_{[nx]}-T_{[ny]}\right\vert+1}{b_n}\right)^{2d-1}dxdy} 
{\int_{[0,1]^2}\mathbb{E}\left[u(\left\vert T_{[nx]}-T_{[ny]}\right\vert+1)\left(\frac{\ell(b_n)}{\ell^*(b_n)}\frac{\left\vert T_{[nx]}-T_{[ny]}\right\vert+1}{b_n}\right)^{2d-1}\right]dxdy}.
\end{eqnarray}
Since the integrand in the numerator of $R_n(T)$ converges in probability to $\tilde C_d$ by (\ref{harmonic}), 
using \cite{CREMERS1986305} and (iv) of Lemma \ref{technical} we obtain that the numerator of $R_n(T)$ converges in probability to $\tilde C_d$. The same (iv) of Lemma \ref{technical} guarantees the uniform integrability of this integrand (with respect of $\lambda^2\otimes P$, where $\lambda$ is the Lebesgue probability measure on the unit interval) and hence the convergence of the denominator of $R_n(T)$ to $\tilde C_d$ and hence the convergence of $R_n(T)$ to 1 in probability. \\\\
{\bf Proof of (\ref{conditional3}).} We can write
\begin{eqnarray}
R_n(T)&=&\frac{\int_{[0,1]^2}u(\left\vert T_{[nx]}-T_{[ny]}\right\vert+1)\left(\frac{\left\vert T_{[nx]}-T_{[ny]}\right\vert+1}{b_n}\right)^{2d-1}dxdy}
{\int_{[0,1]^2}\mathbb{E}\left[u(\left\vert T_{[nx]}-T_{[ny]}\right\vert+1)\left(\frac{\left\vert T_{[nx]}-T_{[ny]}\right\vert+1}{b_n}\right)^{2d-1}\right]dxdy}.
\end{eqnarray}
Clearly from (\ref{fclt}), the finite dimensional distributions of
$$
u(\left\vert T_{[nx]}-T_{[ny]}\right\vert+1)\left(\frac{\left\vert T_{[nx]}-T_{[ny]}\right\vert+1}{b_n}\right)^{2d-1}
$$
converge to those of $\tilde C_d\left(\vert L_x-L_y\vert\right)^{2d-1}$ and (since $u$ is bounded) by (ii) of Lemma \ref{technical}
$$
u(\left\vert T_{[nx]}-T_{[ny]}\right\vert+1)\left(\frac{\left\vert T_{[nx]}-T_{[ny]}\right\vert+1}{b_n}\right)^{2d-1}
$$
is $\lambda^2\otimes P$ uniformly integrable and hence using \cite{CREMERS1986305} we conclude that 
\begin{equation}\label{limit_ind_dist}
\int_{[0,1]^2}u(\left\vert T_{[nx]}-T_{[ny]}\right\vert+1)\left(\frac{\left\vert T_{[nx]}-T_{[ny]}\right\vert+1}{b_n}\right)^{2d-1}dxdy\overset{\mathcal{D}}{\to}
\tilde C_d\int_{[0,1]^2}\vert L_x-L_y\vert^{2d-1}dxdy.
\end{equation}
Combining this with (\ref{inverse_levy}) 
and the uniform integrability of the left-hand side in (\ref{limit_ind_dist}) as well as the self-similarity of $L_t$, we obtain that 
\begin{equation*}
\int_{[0,1]^2}\mathbb{E}\left[u(\left\vert T_{[nx]}-T_{[ny]}\right\vert+1)\left(\frac{\left\vert T_{[nx]}-T_{[ny]}\right\vert+1}{b_n}\right)^{2d-1}\right]dxdy\to
\tilde C_d\mathbb{E}\left[\left(\int_{[0,1]^2}\vert L_x-L_y\vert^{2d-1}dxdy\right)\right]
\end{equation*}
and we have 
\begin{eqnarray*}
\tilde C_d\mathbb{E}\left[\left(\int_{[0,1]^2}\vert L_x-L_y\vert^{2d-1}dxdy\right)\right]
&&=2\tilde C_d\left(\int_{0<x<y<1}(y-x)^{\frac{2d-1}{\alpha}}dxdy\right)\mathbb{E}\left(L_1^{2d-1}\right)\\
&&=2\tilde C_d\frac{1}{\left(1+\frac{2d-1}{\alpha}\right)\left(2+\frac{2d-1}{\alpha}\right)}\frac{\Gamma\left(\frac{1-2d}{\alpha}\right)}{\alpha\Gamma(1-2d)}\\
&&=\tilde C_d\frac{1}{(\alpha+2d-1)(2\alpha+2d-1)}\frac{2\alpha\Gamma\left(\frac{1-2d}{\alpha}\right)}{\Gamma(1-2d)}=\frac{\tilde C_d}{C_{\alpha,1-2d}},
\end{eqnarray*}
which shows that $R_n(T)\overset{\mathcal{D}}{\to}Z(\alpha,d)$. It remains to show that $R_n(T)$ and $S'_n(X,T)$ are asymptotically independent when $1-2d<\alpha<1$. We have
\begin{eqnarray*}
P\left(R_n(T)\le u,S_n'(X,T)\le v\right)
=\mathbb{E}\left(1_{\{R_n(T)\le u\}}1_{\{S'_n(X,T)\le v\}}\right)
=\mathbb{E}\left(1_{\{R_n(T)\le u\}}\mathbb{E}\left[1_{\{S'_n(X,T)\le v\}}\vert T\right]\right).
\end{eqnarray*}
Since
$
1_{\{R_n(T)\le u\}}\overset{\mathcal{D}}{\to}1_{\{Z(\alpha,d)\le u\}}
$
and
$
\mathbb{E}\left[1_{\{S'_n(X,T)\le v\}}\vert T\right]\overset{P}{\to}P(N\le v),
$ (from (\ref{almost sure})),
we obtain by Slutsky Theorem that 
$
1_{\{R_n(T)\le u\}}\mathbb{E}\left[1_{\{S'_n(X,T)\le v\}}\vert T\right]\overset{\mathcal{D}}{\to}1_{\{Z(\alpha,d)\le u\}}P(N\le v),
$
which, with Bounded Convergence Theorem, implies that
$$
\mathbb{E}\left(1_{\{R_n(T)\le u\}}\mathbb{E}\left[1_{\{S'_n(X,T)\le v\}}\vert T\right]\right)\to \mathbb{E}\left(1_{\{Z(\alpha,d)\le u\}}P(N\le v)\right)=P(Z(\alpha,d)\le u)P(N\le v).
$$
{\bf Proof of (\ref{continuity}).}
First, we show the continuity (from the right) at $\alpha=1-2d$. It is easy to check that $\mathbb{E}(Z(\alpha,d))=1$. For simplicity let $r=1-2d$. We show that as $\alpha\downarrow r$ then $\textrm{Var}(Z(\alpha,d))\to0$. Noting that $L_t$ is nondecreasing, $\alpha$-self-similar with stationary and independent increments, we can write for $\alpha>r$ 
 \begin{eqnarray*}\lefteqn{
\textrm{Var}(Z(\alpha,d))}\\
&&=4C^2_{\alpha,r}\int_{0<s_1<s_2<1}\int_{0<t_1<t_2<1}\left[\mathbb{E}\left(\left(L_{s_2}-L_{s_1}\right)^{-r}
 \left(L_{t_2}-L_{t_1}\right)^{-r}\right)
 -\mathbb{E}(L_{s_2-s_1}^{-r})\mathbb{E}(L_{t_2-t_1}^{-r})\right]dt_1dt_2ds_1ds_2\\
 &&=
 8C^2_{\alpha,r}\int_{0<s_1<t_1<t_2<s_2<1}\left[\mathbb{E}\left(\left(L_{s_2}-L_{s_1}\right)^{-r}
 \left(L_{t_2}-L_{t_1}\right)^{-r}\right)
 -\mathbb{E}(L_{s_2-s_1}^{-r})\mathbb{E}(L_{t_2-t_1}^{-r})\right]dt_1dt_2ds_1ds_2\\
 &&+
 8C^2_{\alpha,r}\int_{0<t_1<s_1<t_2<s_2<1}\left[\mathbb{E}\left(\left(L_{s_2}-L_{s_1}\right)^{-r}
 \left(L_{t_2}-L_{t_1}\right)^{-r}\right)
 -\mathbb{E}(L_{s_2-s_1}^{-r})\mathbb{E}(L_{t_2-t_1}^{-r})\right]dt_1dt_2ds_1ds_2\\
 &&:=
 I(r,\alpha)+II(r,\alpha).
 \end{eqnarray*}
 Putting $t=t_2-t_1$ and $s=s_2-s_1$ and observing that 
 $$
 \{0<s_1<t_1<t_2<s_2<1\}\subset\{0<s_2-s<t_1<s_2<1\}\cap\{0<t<s<1\},
 $$
 we obtain that
 \begin{eqnarray*}
 \frac{1}{8}I(r,\alpha)
 &\le& 
C^2_{\alpha,r}\int_{0<s_1<t_1<t_2<s_2<1}
\mathbb{E}\left(L_{t_2}-L_{t_1}\right)^{-r}
\Bigg[\mathbb{E}\left(\left(
L_{t_1}-L_{s_1}+L_{s_2}-L_{t_2}
\right)^{-r}\right)-\mathbb{E}(L_{s_2-s_1}^{-r})\Bigg]dtds\\
&=&
C^2_{\alpha,r}\int_{0<s_1<t_1<t_2<s_2<1}
\mathbb{E}\left(L_{t_2}-L_{t_1}\right)^{-r}
\Bigg[\mathbb{E}\left(\left(
L_{t_1}-L_{s_1}+L_{t_1+s_2-t_2}-L_{t_1}
\right)^{-r}\right)-\mathbb{E}(L_{s_2-s_1}^{-r})\Bigg]dtds\\
&=&
C^2_{\alpha,r}\int_{0<s_1<t_1<t_2<s_2<1}
\mathbb{E}\left(L_{t_2}-L_{t_1}\right)^{-r}
\Bigg[\mathbb{E}\left(\left(
L_{s_2-s_1-(t_2-t_1)}
\right)^{-r}\right)-\mathbb{E}(L_{s_2-s_1}^{-r})\Bigg]dtds\\
&\le&
C^2_{\alpha,r}\mathbb{E}(L_1^{-r})\int_{0<t<s<1}
t^{-\frac{r}{\alpha}}\left[(s-t)^{-\frac{r}{\alpha}}-s^{-\frac{r}{\alpha}}\right]sdsdt\\
&=&
C^2_{\alpha,r}\mathbb{E}(L_1^{-r})\int_{0<t<s<1}
t^{-\frac{r}{\alpha}}\left[(s-t)^{1-\frac{r}{\alpha}}-s^{1-\frac{r}{\alpha}}\right]dsdt\\
&&+C^2_{\alpha,r}\mathbb{E}(L_1^{-r})\int_{0<t<s<1}
t^{1-\frac{r}{\alpha}}(s-t)^{-\frac{r}{\alpha}}dsdt\\
&\le&2C^2_{\alpha,r}\mathbb{E}(L_1^{-r})\frac{1}{1-\frac{r}{\alpha}}\to0\qquad\textrm{as }\alpha\downarrow r,
 \end{eqnarray*}
 since when $\alpha\downarrow r$, $C_{\alpha,r}=O\left(1-\frac{r}{\alpha}\right)$ and $\mathbb{E}(L_1^{-r})=\frac{\Gamma\left(\frac{r}{\alpha}\right)}{\alpha\Gamma(r)}\to1/(r\Gamma(r))<\infty$
 by (\ref{down1}) and (\ref{inverse_levy}).\\
Let $u=pt_2+qs_1$ where $p+q=1$, $p,q>0$ fixed. Then with $h_1=s_1-t_1$, $h=t_2-s_1$ and $h_2=s_2-t_2$, we can write
 \begin{eqnarray}\lefteqn{
\frac{1}{8}II(r,\alpha)}\nonumber\\
&&\le C^2_{\alpha,r}\int_{0<t_1<s_1<t_2<s_2<1}
\Bigg[\mathbb{E}\left(\left(L_{u}-L_{t_1}\right)^{-r}
 \left(L_{s_2}-L_u\right)^{-r}\right)
 -\mathbb{E}(L_{s_2-s_1}^{-r})\mathbb{E}(L_{t_2-t_1}^{-r})\Bigg]dt_1dt_2ds_1ds_2\nonumber\\
 &&=C^2_{\alpha,r}\mathbb{E}(L_1^{-r})
 \int_{0<h+h_1+h_2<1}
 \left((h_1+ph)^{-\frac{r}{\alpha}}(h_2+qh)^{-\frac{r}{\alpha}}
 -(h_1+h)^{-\frac{r}{\alpha}}(h_2+h)^{-\frac{r}{\alpha}}
 \right)dhdh_1dh_2\nonumber\\
 &&=
C^2_{\alpha,r}\mathbb{E}(L_1^{-r})
 \int_{0<h+h_1+h_2<1}
 (h_1+ph)^{-\frac{r}{\alpha}}\left[(h_2+qh)^{-\frac{r}{\alpha}}
 -(h_2+h)^{-\frac{r}{\alpha}}\right]
 dh_1dh_2dh\label{int11}\\
 &&+C^2_{\alpha,r}\mathbb{E}(L_1^{-r})
 \int_{0<h+h_1+h_2<1}
 (h_2+h)^{-\frac{r}{\alpha}}\left[(h_1+ph)^{-\frac{r}{\alpha}}
 -(h_1+h)^{-\frac{r}{\alpha}}\right]
 dh_2dh_1dh\label{int22}
 \end{eqnarray}
 Integrating with respect to $h_1$ then using the mean value Theorem, the integral in (\ref{int11}) can be bounded by
 \begin{eqnarray*}
\int_0^1\int_0^{1-h}h^{1-\frac{r}{\alpha}}(h_2+qh)^{-1-\frac{r}{\alpha}}dh_2dh
&=&\frac{\alpha}{r}\int_0^1h^{1-\frac{r}{\alpha}}\left((qh)^{-\frac{r}{\alpha}}-(1-ph)^{-\frac{r}{\alpha}}\right)dh\\
&\le&
\frac{\alpha}{r}\int_0^1h^{1-\frac{r}{\alpha}}(qh)^{-\frac{r}{\alpha}}dh=
q^\frac{\alpha}{r}\frac{r}{2\alpha}\frac{1}{1-\frac{r}{\alpha}}.
 \end{eqnarray*}
 The integral in (\ref{int22}) treats the same way. Therefore we conclude that $II(r,\alpha)\to0$ as $\alpha\downarrow r$ and hence $\textrm{Var}(Z(\alpha,d))\to$ as $\alpha\downarrow r.$\\
The proof that $\textrm{Var}(Z(\alpha,d))\to0$ as $\alpha\uparrow1$ is straightforward. Actually,
 We easily see from (\ref{inverse_levy}) that as $\alpha\uparrow1$, 
 $\mathbb{E}(L_1^{-kr})\to1$ for all $k\ge1$, so that 
 $\textrm{Var}(L_1^{-r})\to0$.
 Hence, using Cauchy-Schartz inequelity and self-similarity, we obtain that, as $\alpha\uparrow1,$
 \begin{eqnarray*}
\textrm{Var}(Z(\alpha,d))\le \left(\frac{C_{\alpha,r}}{1-\frac{r}{\alpha}}\right)^2\textrm{Var}(L_1^{-r})\to0.
 \end{eqnarray*}
{\bf Proof of (\ref{moments_problem})}
 Using the generalized Minckowski inequality we have (with $r=1-2d$)
$$
\mathbb{E}(L_{t_1}^{-r}\cdots L_{t_k}^{-r})\le\prod_{i=1}^k\mathbb{E}\left(L^{-rk}_{t_i}\right)^{\frac{1}{k}}=\left(\prod_{i=1}^kt_i^{-\frac{r}{\alpha}}\right)\mathbb{E}\left(L_1^{-rk}\right)
$$
and hence by Fubini Theorem we get
\begin{eqnarray*}\lefteqn{
\nu_k:=\mathbb{E}\left[\left(\int_{[0,1]^2}\vert L_t-L_s\vert^{-r}dtds\right)^k\right]}\\
&&=\int_{[0,1]^{2k}}\mathbb{E}(\vert L_{t_1}-L_{s_1}\vert^{-r}\cdots \vert L_{t_k}-L_{s_k}\vert^{-r})dt_1\cdots dt_kds_1\cdots ds_k\\
&&\le\left(\frac{1}{d(2d+1)}\right)^k\mathbb{E}\left(L_1^{-rk}\right),
\end{eqnarray*}
using (\ref{value in d}).
Hence
\begin{eqnarray*}
\sum_{k=1}^\infty\nu_k^{{-1/k}}\ge d(2d+1)\left(\mathbb{E}\left(L_1^{-rk}\right)\right)^{-1/k}.
\end{eqnarray*}
From (\ref{inverse_levy}) we get
$$
\left(\mathbb{E}\left(L_1^{-rk}\right)\right)^{-1/k}=\left(\frac{\alpha\Gamma(rk)}{\Gamma(rk/\alpha)}\right)^{1/k}.
$$
Using \cite{Anderson1997AMP} bounds of the $\Gamma$ function , we have for all $x>0$
$$
x^{(1-\gamma)x-1}<\Gamma(x)<x^{x-1},
$$
where $\gamma$ is the Euler-Mascheroni constant ($\gamma=.577...)$, and hence we get
$$
\left(\frac{\alpha\Gamma(rk)}{\Gamma(rk/\alpha)}\right)^{1/k}>
\left(\frac{\alpha(rk)^{(1-\gamma)rk-1}}{\left(\frac{rk}{\alpha}\right)^{rk/\alpha-1}}\right)^{1/k}
=\frac{(rk)^{(1-\gamma)r}}{\left(\frac{rk}{\alpha}\right)^{r/\alpha}}=Ck^{\left((1-\gamma)-\frac{1}{\alpha}\right)r}\ge Ck^{-\frac{r}{\alpha}}
$$
which is not summable (since $r/\alpha<1$)
and hence
$$
\sum_{k=1}^\infty\nu_k^{{-1/k}}=\infty,
$$
which implies that $(\nu_k)_{k\ge1}$ determine the distribution of 
$$
\int_{[0,1]^2}\vert L_t-L_s\vert^{-r}dtds.
$$
 \section{Proof of the Lemmas \ref{technical} and \ref{lem1}}

 {\bf Proof of Lemma \ref{technical}}
(i) Using the fact that $M_n+1>1$ and that $\ln(1-x)\le-x$ for $0<x<1$, we can write
\begin{eqnarray*}
\mathbb{E}\left(\frac{M_n+1}{b_n}\right)^{-r}&=&\int_0^{b_n^r} P(M_n+1<b_nx^{-1/r})dx\le1+\int_1^{b_n^r} P(M_n<b_nx^{-1/r})dx\\
&=&1+\int_1^{b_n^r} [P(T_1\le b_nx^{-1/r})]^ndx=1+\int_1^{b_n^r} \left[1-\left(b_nx^{-1/r}\right)^{-\alpha}\ell\left(b_nx^{-1/r}\right)\right]^ndx\\
&\le& 1+\int_1^{b_n^r} \exp\left(-nb_n^{-\alpha}x^{\alpha/r}\ell\left(b_nx^{-1/r}\right)\right)dx=1+\int_1^{b_n^r}\exp\left(-x^{\alpha/r}\frac{\ell\left(b_nx^{-1/r}\right)}{\ell\left(b_n\right)}\right)dx.
\end{eqnarray*}
Also, it is clear from (\ref{karamata1}) that $\ell(x)$ 
is bounded away from zero and infinity on every compact interval of $[1,\infty)$ and therefore by Potter's bound (see for instance \cite{bingham1987large} Theorem 1.5.6. ii), we obtain that for every $\delta>0$ there exists a positive constant $C_\delta$ such that for every $n$ and $x\in[1,{b_n^r}]$,
$$ 
\frac{\ell\left(b_nx^{-1/r}\right)}{\ell\left(b_n\right)}\ge C_\delta x^{-\frac{\delta}{r}}
$$ and hence, taking $\delta<\alpha$, the last integral above is bounded by
$$ 
\int_1^\infty e^{-C_\delta x^\frac{\alpha-\delta}{r}}dx<\infty.
$$
(ii)
Using the fact that $b_n$ is the inverse function of a nondecreasing regularly function with index $\alpha$, we obtain that $b_n$ is nondecreasing regularly varying with index $1/\alpha$ and is a slowly varying function (see for instance \cite{resnick2007heavy} Proposition 2.6 (v)) that is bounded away from zero and infinity on compact intervals. Also, using the fact that for $x>y$, $[nx]-[ny]+1\ge n(x-y)$ then with similar argument as above, we obtain that for for every $\delta>0$ there exists $C_\delta>0$ such that for all $n$ and $x,y\in[0,1]$
$$
\left(\frac{b_{[nx]-[ny]+1}}{b_n}\right)^{-r}\le C_\delta \vert x-y\vert^{-r/\alpha-\delta}
$$
which is integrable for small $\delta$. 
Moreover, it is easy to check that $b_{k+1}/b_k\le C_1$ for some positive constant $C_1$ for all $k\ge1$. Denoting 
$D_n=\{x,y\in[0,1],\,\,x>y+1/n\}$, we can write for some $r<\alpha$
$$
\int_{[0,1]^2}\mathbb{E}\left(\frac{\left\vert T_{[nx]}-T_{[ny]}\right\vert+1}{b_n}\right)^{-r}dxdy
\le 2\frac{b_n^r}{n}+2\int_{D_n}\mathbb{E}\left(\frac{\left\vert T_{[nx]}-T_{[ny]}\right\vert+1}{b_n}\right)^{-r}dxdy.
$$
Note that $b_n^r/n\to0$ and hence bounded, and that on $D_n$, $[nx]-[ny]\ge1$, so that, using (i) to bound the expected value in the right-hand side below, we get
\begin{eqnarray*}
\int_{D_n}\mathbb{E}\left(\frac{T_{[nx]}-T_{[ny]}+1}{b_n}\right)^{-r}dxdy
&\le&\int_{D_n}\mathbb{E}\left(\frac{T_{[nx]-[ny]}+1}{b_{[nx]-[ny]}}\right)^{-r}
\left(\frac{b_{[nx]-[ny]}}{b_{[nx]-[ny]+1}}\frac{b_{[nx]-[ny]+1}}{b_n}\right)^{-r}dxdy\\
&\le&CC_1C_\delta\int_{[0,1]^2}\vert x-y\vert^{-r/\alpha-\delta}=\frac{CC_\delta}{1-r/\alpha-\delta}\frac{1}{2-r/\alpha-\delta},
\end{eqnarray*}
which completes the proof of (ii) of the Lemma \ref{technical}.\\
(iii)
We have
$$
\frac{T_n+1}{b_n}=\sum_{j=1}^{n/2}\frac{\Delta_j}{b_n}+\sum_{j=n/2+1}^n\frac{\Delta_j}{b_n}+\frac{1}{b_n}\ge
\sum_{j=1}^{n/2}\frac{\Delta_j}{b_n}1_{\{\Delta_j\le b_n\}}+
\frac{M^*_n+1}{b_n}
$$
where 
$$
M^*_n=\underset{n/2+1\le j\le n}{\max}(\Delta_j).
$$
Let us evaluate the mean and the variance of the truncated sum above. Using (\ref{heavy tail}), (\ref{elstar}) and (\ref{bn}) we can write
$$
\mu_n:=\mathbb{E}\left(\sum_{j=1}^{n/2}\frac{\Delta_j}{b_n}1_{\{\Delta_j\le b_n\}}\right)
=\frac{n}{2b_n}\left(\ell^*(b_n)-\ell(b_n)\right)=\frac{\ell^*(b_n)}{2\ell(b_n)}-\frac{1}{2}\to\infty
$$
and we can easily check that
$$ 
\frac{n}{2}\mathbb{E}\left[\left(\frac{\Delta_1}{b_n}\right)^21_{\{\Delta_1\le b_n\}}\right]=\frac{1}{2},
$$
so that (using the fact that the variables are i.i.d.), we have
$$
\sigma^2_n:=\textrm{Var}\left(\sum_{j=1}^{n/2}\frac{\Delta_j}{b_n}1_{\{\Delta_j\le b_n\}}\right)
=\frac{n}{2}\textrm{Var}\left(\frac{\Delta_1}{b_n}1_{\{\Delta_1\le b_n\}}\right)= \frac{n}{2}\mathbb{E}\left[\left(\frac{\Delta_1}{b_n}\right)^21_{\{\Delta_1\le b_n\}}\right]-\frac{4\mu_n^2}{n}\to\frac{1}{2}.
$$
Now using \cite{PITTENGER199091} upper bounds for inverse moments, we obtain (using the fact that $M^*_n$ is independent from $\Delta_j$, $j\le n/2$) that 
\begin{eqnarray*}\mathbb{E}\left[\left(\frac{T_n+1}{b_n}\right)^{-2}\right]&\le&\mathbb{E}\left[\left(\sum_{j=1}^{n/2}\frac{\Delta_j}{b_n}1_{\{\Delta_j\le b_n\}}+\frac{M^*_n+1}{b_n}\right)^{-2}\right]\\
&=&
\mathbb{E}\left(\mathbb{E}\left[\left(\sum_{j=1}^{n/2}\frac{\Delta_j}{b_n}1_{\{\Delta_j\le b_n\}}+\frac{M^*_n+1}{b_n}\right)^{-2}\bigg\vert M^*_n\right]\right)\\
&\le&\mathbb{E}\left(\frac{\sigma_n^2}{\sigma^2_n+\mu_n^2}\left(\frac{M^*_n+1}{b_n}\right)^{-2}\right)+\mu_n^{-2}\\
&=&\left(\frac{\sigma_n^2}{\sigma^2_n+\mu_n^2}\right)\mathbb{E}\left(\left(\frac{M^*_n+1}{b_n}\right)^{-2}\right)+\mu_n^{-2}
\end{eqnarray*}
and hence
$$
\underset{n}{\sup}\,
\mathbb{E}\left[\left(\frac{T_n+1}{b_n\mu_n}\right)^{-2}\right]\le \underset{n}{\sup}\,\left(\frac{\sigma_n^2\mu_n^2}{\sigma^2_n+\mu_n^2}\right)\,\underset{n}{\sup}\,\mathbb{E}\left(\left(\frac{M^*_n+1}{b_n}\right)^{-2}\right)+1<\infty 
$$
as $\sigma^2_n$ is bounded and $\mu_n\to\infty$, 
which completes the proof of (iii). \\
(iv) Similar to the (ii), we have
\begin{eqnarray}\label{vague1}\lefteqn{
\int_{[0,1]^2}\mathbb{E}\left[\left(\frac{T_{\vert[nx]-[ny]\vert}+1}{b_n}\frac{\ell(b_n)}{\ell^*(b_n)}\right)^{-r}\right]dxdy}\nonumber\\
&&\le 2\frac{b_n^r}{n}\left(\frac{\ell(b_n)}{\ell^*(b_n)}\right)^{-r}
+2\int_{D_n}\mathbb{E}\left(\frac{T_{\vert[nx]-[ny]\vert}+1}{b_n}\frac{\ell(b_n)}{\ell^*(b_n)}\right)^{-r}dxdy.
\end{eqnarray} 
Since $b_n=nh(n)$ with $h$ a slowly varying function at infinity, we have for $0<r<1$,
$$
\frac{b_n^r}{n}\left(\frac{\ell(b_n)}{\ell^*(b_n)}\right)^{-r}\to0
$$
and hence bounded.
From (iii), the integrand in (\ref{vague1}) is uniformly bounded over $D_n$ and hence the second term in (\ref{vague1}) is also uniformly bounded. This concludes the proof of (iv).\\\\
 {\bf Proof of Lemma \ref{lem1}}\\
{\bf Part (i)}
 According to \eqref{eq:a} we have $\sigma_X(h) \sim \tilde C_d h^{2d-1}$ with $\tilde C_d >0 $ so the autocovariance function becomes positive starting from some lag $m-1$. Also $\sigma_Y(h) = \mathbb{E} (\sigma_X(T_h) )>0$ for $h\geq m-1 $ since $T_h\geq h$. 
We have $$0 \le \textrm{Var}\left ( \frac{1}{\sqrt{m}} \sum_{j=1}^m Y_j \right) = \sum_{j=-m}^m \sigma_Y(j) \left(1- \frac jm \right). $$
\begin{enumerate}
 \item Case 1 : $\sum_{h=-\infty}^\infty |\sigma_Y(h) | < \infty $. \\ 
We have 
$$0\le \sum_{j=-m}^m \sigma_Y(j) \left(1- \frac jm \right) \to 
\sum_{h=-\infty}^\infty \sigma_Y(h).$$ 
If this limit is positive, it is immediate that there exists $m$ such that $ \sum_{j= -m } ^m \sigma_Y(j) >0. $
If $\sum_{h=-\infty}^\infty \sigma_Y(h) = 0$ , since $\sigma_Y(h) >0$ for $h>m$ then $\sum_{j=-m }^m \sigma_Y(j) \left(1- \frac jm \right) < 0$ which is a contradiction. 
\item Case 2 : $\sum_{h=-\infty}^\infty |\sigma_Y(h) | = \infty $ \\ 
Since $\sigma_Y(h) >0$ for $h>m$, we must have $\sum_{h=-\infty}^\infty \sigma_Y(h) = \infty $ which implies that there exists $m_1$ such that 
 $\sum_{h=-m_1}^{m_1} \sigma_Y(h) >0 $. 
\end{enumerate}
This completes the proof of part (i) of Lemma \ref{lem1}.\\
{\bf Part (ii)}
{\bf Step 1 } 
First of all, for $j<T_n$, positive integer, let $k(j)=\inf\{k\ge1:\quad T_k\ge j\}$. 
Since $a_k\sim ck^{d-1}$ as $k\to\infty$, then $a_k/((k+1) ^{d-1})$ is bounded and hence for $j\geq k(j)$, $$ |a_{T_k-j}| \leq C (T_k-j+1)^{d-1} \leq C (T_k- T_{k(j)}) +1)^{d-1} 
 \leq C (k- {k(j}) +1)^{d-1} 
$$ 
$$ 
\vert d_{n,j}\vert = \left\vert\sum_{k={k(j)}}^na_{T_k-j}\right\vert \leq \sum_{k=k(j)}^n C (k- {k(j}) +1)^{d-1} 
\leq \sum_{k=1}^n C k^{d-1} \sim C n^{d} .
$$ 

{\bf Step 2}

\begin{eqnarray*}\lefteqn{
\textrm{Var}\left(\sum_{k=1}^nY_k\vert T_1,\ldots,T_n\right)=\sigma^2\left(\sum_{j\in\mathbb{Z}}d^2_{n,j}\right)}\\
&&=\sum_{k=1}^n\sum_{k'=1}^n\sigma_X(T_k-T_{k'})=
n^2\int_{[0,1]^2}\sigma_X(T_{[nx]}-T_{[ny]})dxdy.
\end{eqnarray*}
For all $(x,y)\in [0 , 1]^2$ such that $|x-y| > m/n $ we have 
$$ |T_{[nx]}-T_{[ny]}| \geq |[nx]-[ny]| \geq m - 1, $$
and hence by part (i) of Lemma \ref{lem1}, $\sigma_X(T_{[nx]}-T_{[ny]}) >0 $. 
We have 

 \begin{eqnarray*}
n^2 \int_{[0,1]^2} \sigma_X(T_{[nx]}-T_{[ny]})dxdy &=& n^2 \int_{ |x-y| \leq \frac{m}{n}} \sigma_X(T_{[nx]}-T_{[ny]})dxdy +n^2 \int_{ |x-y| > \frac{m}{n}} \sigma_X(T_{[nx]}-T_{[ny]})dxdy \\ 
&\geq& n^2 \int_{ |x-y| \leq \frac{m}{n}} \sigma_X(T_{[nx]}-T_{[ny]})dxdy \\
&=& n^2 \sum_{j= - 
m}^m \sum_{k=
1}^{n - |j|} \sigma_X(\Delta_k +\cdots + \Delta_{k+j}) \, \frac{1}{n^2
} \\ 
&=& {n}\sum_{j= - 
m}^m \frac{1}{n} \sum_{k=
1}^{n - |j|} \sigma_X(\Delta_k +\cdots + \Delta_{k+j}) .
\end{eqnarray*}
For fixed $j\le m$, $(\sigma_X(\Delta_k +\cdots + \Delta_{k+j})_k$ is $j$-dependent stationary sequence and hence satisfies the Law of Large Numbers: 
$$\frac{1}{n} \sum_{k=
1}^{n - |j|} \sigma_X(\Delta_k +\cdots + \Delta_{k+j}) \overset{a.s.}{\to} \mathbb{E}( \sigma_X( \Delta_1+\cdots+\Delta_j)) = \sigma_Y(j).$$
Hence 
$$ \sum_{j= - 
m}^m \frac{1}{n} \sum_{k=
1}^{n - |j|} \sigma_X(\Delta_k +\cdots + \Delta_{k+j}) \overset{a.s.}{\to} \sum_{j= -m }^m \sigma_Y(j) >0. 
$$ 
 
In conclusion we obtain that 
$$
\frac{1}{n ^{2d }} \textrm{Var}\left(\sum_{k=1}^nY_k\vert T_1,\ldots,T_n\right) \overset{a.s.}{\to} \infty 
$$
which concludes the proof of part (ii) of the Lemma.\\
\bibliographystyle{apalike}
\bibliography{subsampling}

\begin{thebibliography}{}

\bibitem[Anderson and Qiu, 1997]{Anderson1997AMP}
Anderson, G. and Qiu, S.-L. (1997).
\newblock A monotoneity property of the gamma function.
\newblock {\em Proceedings of the American Mathematical Society},
  125(11):3355--3362.

\bibitem[Bardet and Bertrand, 2010]{bardet}
Bardet, J.-M. and Bertrand, P. (2010).
\newblock A non-parametric estimator of the spectral density of a
  continuous-time gaussian process observed at random times.
\newblock {\em Scandinavian Journal of Statistics}, 37(3):458--476.

\bibitem[Beelaerts et~al., 2010]{Beelaerts2010TimeSeriesRF}
Beelaerts, V., Ridder, F.~D., Schmitz, N., Bauwens, M., and Pintelon, R.
  (2010).
\newblock Time-series reconstruction from natural archive data
  with the averaging effect taken into account.
\newblock {\em Mathematical Geosciences}, 42:705--722.

\bibitem[Billingsley, 2012]{billingsley2017probability}
Billingsley, P. (2012).
\newblock {\em Probability and measure, anniversary edition}.
\newblock John Wiley \& Sons.

\bibitem[Bingham et~al., 1987]{bingham1987large}
Bingham, N., Goldie, C., and Teugels, J. (1987).
\newblock {\em Regular Variation}.
\newblock Cambridge University Press.

\bibitem[Chambers, 1996]{chambers}
Chambers, M.~J. (1996).
\newblock The estimation of continuous parameter long-memory time series
  models.
\newblock {\em Econometric Theory}, 12(2):374--390.

\bibitem[Comte, 1996]{comte1}
Comte, F. (1996).
\newblock Simulation and estimation of long memory continuous time models.
\newblock {\em J. Time Ser. Anal.}, 17(1):19--36.

\bibitem[Comte and Renault, 1996]{comte2}
Comte, F. and Renault, E. (1996).
\newblock Long memory continuous time models.
\newblock {\em J. Econometrics}, 73(1):101--149.

\bibitem[Cremers and Kadelka, 1986]{CREMERS1986305}
Cremers, H. and Kadelka, D. (1986).
\newblock On weak convergence of integral functionals of stochastic processes
  with applications to processes taking paths in lep.
\newblock {\em Stochastic Processes and their Applications}, 21(2):305--317.

\bibitem[Giraitis et~al., 2012]{surgailis2012large}
Giraitis, L., Koul, H., and Surgailis, D. (2012).
\newblock {\em Large sample inference for long memory processes}.
\newblock World Scientific Publishing Company.

\bibitem[Hintz et~al., 2021]{HINTZ2021107175}
Hintz, E., Hofert, M., and Lemieux, C. (2021).
\newblock Normal variance mixtures: Distribution, density and parameter
  estimation.
\newblock {\em Computational Statistics \& Data Analysis}, 157:107175.

\bibitem[Huang et~al., 2024]{huang2024generative}
Huang, H., Chen, M., and Qiao, X. (2024).
\newblock Generative learning for financial time series with irregular and
  scale-invariant patterns.
\newblock In {\em The Twelfth International Conference on Learning
  Representations}.

\bibitem[Ibragimov and Linnik, 1971]{ibragimov1971independent}
Ibragimov, I. and Linnik, I. (1971).
\newblock {\em Independent and Stationary Sequences of Random Variables}.
\newblock Monographs and textbooks on pure and applied mathematics.
  Wolters-Noordhoff.

\bibitem[Jones, 1984]{10.1007/978-1-4684-9403-7_8}
Jones, R.~H. (1984).
\newblock Fitting multivariate models to unequally spaced data.
\newblock In Parzen, E., editor, {\em Time Series Analysis of Irregularly
  Observed Data}, pages 158--188, New York, NY. Springer New York.

\bibitem[Jones, 1985]{JONES1985157}
Jones, R.~H. (1985).
\newblock Time series analysis with unequally spaced data.
\newblock In {\em Time Series in the Time Domain}, volume~5 of {\em Handbook of
  Statistics}, pages 157--177. Elsevier.

\bibitem[Koga et~al., 2022]{pmlr-v151-koga22a}
Koga, T., Meehan, C., and Chaudhuri, K. (2022).
\newblock Privacy amplification by subsampling in time domain.
\newblock In Camps-Valls, G., Ruiz, F. J.~R., and Valera, I., editors, {\em
  Proceedings of The 25th International Conference on Artificial Intelligence
  and Statistics}, volume 151 of {\em Proceedings of Machine Learning
  Research}, pages 4055--4069. PMLR.

\bibitem[Lomb, 1976]{Lomb1976LeastsquaresFA}
Lomb, N.~R. (1976).
\newblock Least-squares frequency analysis of unequally spaced data.
\newblock {\em Astrophysics and Space Science}, 39:447--462.

\bibitem[Ojeda et~al., 2023]{10.1093rastirzac011}
Ojeda, C., Palma, W., Eyheramendy, S., and Elorrieta, F. (2023).
\newblock {Extending time-series models for irregular observational gaps with a
  moving average structure for astronomical sequences}.
\newblock {\em RAS Techniques and Instruments}, 2(1):33--44.

\bibitem[Ould~Haye et~al., 2024]{OuldHaye2023}
Ould~Haye, M., Philippe, A., and Robet, C. (2024).
\newblock Inference for continuous-time long memory randomly sampled processes.
\newblock {\em Statistical Papers}, 65(5):3111--3134.

\bibitem[Parzen, 1984]{Parzen1984TimeSA}
Parzen, E. (1984).
\newblock {\em Time Series Analysis of Irregularly Observed Data: Proceedings
  of a Symposium Held at Texas A \& M University, College Station, Texas
  February 10--13, 1983}, volume~25.
\newblock Springer Science \& Business Media.

\bibitem[Philippe et~al., 2021]{PhilippeRobetViano}
Philippe, A., Robet, C., and Viano, M.-C. (2021).
\newblock Random discretization of stationary continuous time processes.
\newblock {\em Metrika}, 84(3):375--400.

\bibitem[Philippe and Viano, 2010]{philippe-viano2008}
Philippe, A. and Viano, M. (2010).
\newblock Random sampling of long-memory stationary processes.
\newblock {\em Journal of Statistical Planning and Inference.}, 140:1110--1124.

\bibitem[Pittenger, 1990]{PITTENGER199091}
Pittenger, A. (1990).
\newblock Sharp mean-variance bounds for jensen-type inequalities.
\newblock {\em Statistics \& Probability Letters}, 10(2):91--94.

\bibitem[Rao et~al., 2014]{PMID:25331886}
Rao, C.~R., Shi, X., and Wu, Y. (2014).
\newblock Approximation of the expected value of the harmonic mean and some
  applications.
\newblock {\em Proceedings of the National Academy of Sciences of the United
  States of America}, 111(44):15681—15686.

\bibitem[Resnick, 2007]{resnick2007heavy}
Resnick, S.~I. (2007).
\newblock {\em Heavy-tail phenomena: probabilistic and statistical modeling}.
\newblock Springer Science \& Business Media.

\bibitem[Shanbhag and Sreehari, 1977]{Shanbhag1977OnCS}
Shanbhag, D.~N. and Sreehari, M. (1977).
\newblock On certain self-decomposable distributions.
\newblock {\em Zeitschrift f{\"u}r Wahrscheinlichkeitstheorie und Verwandte
  Gebiete}, 38:217--222.

\bibitem[Shukla and Marlin, 2018]{Shukla2018ModelingIS}
Shukla, S.~N. and Marlin, B.~M. (2018).
\newblock Modeling irregularly sampled clinical time series.
\newblock {\em ArXiv}, abs/1812.00531.

\bibitem[Sun et~al., 2024]{SUN2024110075}
Sun, C., Li, H., Song, M., Cai, D., Zhang, B., and Hong, S. (2024).
\newblock Time pattern reconstruction for classification of irregularly sampled
  time series.
\newblock {\em Pattern Recognition}, 147:110075.

\bibitem[Tsai and Chan, 2005a]{tsai1}
Tsai, H. and Chan, K.~S. (2005a).
\newblock Maximum likelihood estimation of linear continuous time long memory
  processes with discrete time data.
\newblock {\em J. R. Stat. Soc. Ser. B Stat. Methodol.}, 67(5):703--716.

\bibitem[Tsai and Chan, 2005b]{tsai2}
Tsai, H. and Chan, K.~S. (2005b).
\newblock Quasi-maximum likelihood estimation for a class of continuous-time
  long-memory processes.
\newblock {\em J. Time Ser. Anal.}, 26(5):691--713.

\end{thebibliography}

\end{document}